\newcommand{\supp}{\operatorname{supp}} 
\newtheorem{theorem}{Theorem}
\newtheorem*{theorem*}{Theorem}
\newtheorem{proposition}{Proposition}
\newtheorem{lemma}{Lemma}
\newtheorem{corollary}{Corollary}
\theoremstyle{definition}
\newtheorem{definition}{Definition}
\newtheorem{example}{Example}
\theoremstyle{remark}
\newtheorem{remark}{Remark}
\begin{document}


\title{Note on limit distribution of normalized return times and escape rate}

\author{Xuan Zhang\\ Department of Mathematics, The Pennsylvania State University\\
State College, PA 16802,
USA\\
zhang\_x@math.psu.edu\footnote{Current address: Instituto de Matem\'{a}tica,
Universidade Federal do Rio de Janeiro, Rio de Janeiro, R.J., Brasil.}}
\date{September 18, 2015}

\maketitle


\begin{abstract}
In this note we discuss limit distribution of normalized return times for shrinking targets and draw a necessary and sufficient condition using sweep-out sequence in order for the limit distribution to be exponential with parameter $1$. The normalizing coefficients are the same as sizes of the targets. Moreover we study escape rate, namely the exponential decay rate of sweep-out sequence and prove that in $\psi$-mixing systems for a certain class of sets the escape rate is in limit proportional to the size of the set.
\end{abstract}

\section{Introduction}	
Let $(X,\mathcal B, \mu)$ be a probability space and let $T:X\to X$ be a measure-preserving and ergodic transformation. For any set $A$ with a positive measure, \begin{equation}\label{def:return}\tau_A(x):=\inf\{k\ge1: T^k(x)\in A\}\end{equation} is called the (first) return time of $x$ if $x\in A$ or the (first) hitting time if $x\in X$ in general. By Poincar\' e's recurrence theorem and ergodicity, $\tau_A$ is finite almost everywhere in $X$ and the trajectory of almost every point in $X$ hits $A$ infinitely many times.  Also Kac's theorem says  that $\int_A \tau_A d\mu=1$. To understand finer statistical properties of return times, people are interested in finding laws similar to those in probability theory. We consider a sequence of positively-measured sets $\{A_n\}_{n=1}^\infty$ with $\mu(A_n)$ decreasing to $0$. Denote by $(A_n,\mathcal B_n, \mu_n:=\frac{\mu|_{A_n}}{\mu(A_n)})$ the induced system. For every $x\in A_n$, the return time of $x$ to $A_n$ is $$\tau_n(x):=\inf\{k\ge 1:T^k(x)\in A_n\}.$$ The question we attempt to answer in this note is, what is a necessary condition in order for the limit distribution of the normalized return time $\mu(A_n)\tau_n$ to be exponential with parameter $1$? The reciprocal of the normalizing factor, $1/\mu(A_n)$, is the expectation of $\tau_{A_n}$ in the induced system according to Kac's theorem. The reason why do we work on this type of distributional limit of normalized return times and why the exponential distribution is a dynamical system  transliteration of the following probability result. Given an array of independent  Bernoulli trials $\{X_{n,m}\}$ with $\mathbb P(X_{n,m}=1)=p_n$ and $\mathbb P(X_{n,m}=0)=1-p_n$ and let $\tau_n=\inf\{k: X_{n,k}=1\}$,  if $p_n(=1/\mathbb E\tau_n)\to0$ then $\lim_{n\to\infty}\mathbb P(\tau_n>[\frac{t}{p_n}])=e^{-t}$.  Historic expositions in both aspects of probability theory and dynamical system can be found in \cite{ag2001,s2009,ha2013}. There have been many works verifying exponential limit distribution for normalized return times in various specific cases, for instances \cite{p1991,h1993,hsv1999,dgs2004}. In more abstract settings, like in \cite{gs1997,ab2004,hp2014} etc., exponential limit distribution has also been demonstrated for normalized return times, whose normalizing coefficients possibly differ from sizes of the targets. On the other hand Lacroix in \cite{l2002} showed that limit distribution of $\mu(A_n)\tau_n$ can indeed be rather arbitrary when we have no good controls on the system or on shrinking targets $\{A_n\}$. In the rest of this note the normalizing coefficients of return times are always made as the sizes of the targets in accordance with Kac's theorem and exponential distribution always means the exponential distribution with parameter $1$.

It is natural to ask what should be the necessary conditions to guarantee exponential limit distribution. This direction is less fruitful, in fact the conditions in \cite{hsv1999} and \cite{hlv2005} are the only ones that are known to us so far. In \cite{hsv1999} it states that the limit distribution of the normalized return time has an exponential law with parameter $1$ if and only if the difference between distributions of the return time and of the hitting time decreases uniformly to $0$. In \cite{hlv2005} an integral equation is discovered relating the limit distribution of the normalized return time to the limit distribution of the normalized hitting time. Moreover the equation implies that one limit distribution is exponential if and only if the same holds for the other one. In this note we use Laplace transforms and sweep-out sequence to obtain another necessary and sufficient condition.
Defining the sweet-out sequence of A $$\tilde{s}_A(k):=\mu(A^c\cap\cdots\cap T^{-k+1}A^c), $$
\begin{theorem}
The limit distribution of the normalized return (or hitting) time $\mu(A_n)\tau_n$ is exponential if and only if for every $t>0$
\begin{equation}\label{eq:swoseries}\lim_{n\to\infty}(1-e^{-\mu(A_n)t})\sum_{k=0}^\infty e^{-\mu(A_n)kt}\tilde{s}_{A_n}(k)=\frac{t}{t+1}.
\end{equation}
\end{theorem}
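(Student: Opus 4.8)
The plan is to express the Laplace transform of the normalized return (resp.\ hitting) time as an explicit elementary function of the single series $S_{A_n}(z):=\sum_{k\ge0}\tilde s_{A_n}(k)z^k$ evaluated at $z=e^{-\mu(A_n)t}$, and then to appeal to the continuity theorem for Laplace transforms: a sequence of probability measures on $[0,\infty)$ converges weakly to the exponential law $\mathrm{Exp}(1)$ exactly when the corresponding Laplace transforms converge pointwise on $(0,\infty)$ to $\frac{1}{1+t}$ (the limit $\frac{1}{1+t}\to1$ as $t\downarrow0$ rules out escape of mass, and $\mu(A_n)\tau_n$ is a bona fide probability law by Poincar\'e recurrence and Kac). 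Matching the two sides will convert the assertion ``the limit distribution is exponential'' into precisely \eqref{eq:swoseries}.

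The first step is a combinatorial identity for $\tilde s_A$. Splitting the cylinder $T^{-1}A^c\cap\cdots\cap T^{-k}A^c$ according to whether its base point lies in $A$ or not and using $T$-invariance gives $\tilde s_A(k)=\mu\bigl(A\cap\{\tau_A>k\}\bigr)+\tilde s_A(k+1)$; taking a further difference then yields $\mu\bigl(A\cap\{\tau_A=k\}\bigr)=\tilde s_A(k-1)-2\tilde s_A(k)+\tilde s_A(k+1)$ for $k\ge1$. For the hitting time the same invariance argument gives the cleaner identity $\mu(\{x\in X:\tau_A(x)>k\})=\tilde s_A(k)$, whence $\mu(\tau_A=k)=\tilde s_A(k-1)-\tilde s_A(k)$.

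Next I multiply these by $z^k$ and sum over $k\ge1$; since $0\le\tilde s_A\le1$ every series converges absolutely for $z\in(0,1)$, so the rearrangements are harmless. Writing $z=e^{-\mu(A_n)t}$ and $L_n:=(1-e^{-\mu(A_n)t})S_{A_n}(e^{-\mu(A_n)t})\in(0,1]$ for the left-hand side of \eqref{eq:swoseries}, a short telescoping gives the closed forms $\int_X e^{-\mu(A_n)t\,\tau_{A_n}}\,d\mu=1-L_n$ for the hitting time and $\int_{A_n}e^{-\mu(A_n)t\,\tau_n}\,d\mu_n=\frac{e^{\mu(A_n)t}-1}{\mu(A_n)}\,L_n+\frac{1+\mu(A_n)-e^{\mu(A_n)t}}{\mu(A_n)}$ for the return time; eliminating $L_n$ between the two recovers the finite-scale identity $\int_{A_n}e^{-\mu(A_n)t\,\tau_n}\,d\mu_n=1-\frac{e^{\mu(A_n)t}-1}{\mu(A_n)}\int_X e^{-\mu(A_n)t\,\tau_{A_n}}\,d\mu$, a Laplace-transform avatar of the Haydn--Lacroix--Vaienti relation between the two distributions.

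Finally, as $\mu(A_n)\to0$ the coefficients converge, $\frac{e^{\mu(A_n)t}-1}{\mu(A_n)}\to t$ and $\frac{1+\mu(A_n)-e^{\mu(A_n)t}}{\mu(A_n)}\to1-t$, so since $L_n$ is bounded, by the continuity theorem the limit law of $\mu(A_n)\tau_n$ is $\mathrm{Exp}(1)$ iff the relevant Laplace transform tends to $\frac{1}{1+t}$ for every $t>0$, i.e.\ iff $1-L_n\to\frac{1}{1+t}$ (hitting case), equivalently $tL_n+(1-t)\to\frac{1}{1+t}$ (return case); using $1-\frac{t}{1+t}=\frac{1}{1+t}$ and $t\cdot\frac{t}{1+t}+(1-t)=\frac{1}{1+t}$, both conditions reduce to $L_n\to\frac{t}{1+t}$ for every $t>0$, which is exactly \eqref{eq:swoseries}. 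I expect the only genuinely delicate points to be keeping the constants straight in the second-difference telescoping that produces the return-time formula, and a careful (though standard) invocation of the Laplace continuity theorem — in particular verifying that one is dealing with honest probability measures and that $\frac{1}{1+t}$ extends continuously to $t=0$; the apparent blow-up of $S_{A_n}(e^{-\mu(A_n)t})$ as $\mu(A_n)\to0$ is harmless, being exactly cancelled by the factor $1-e^{-\mu(A_n)t}$ absorbed into $L_n$.
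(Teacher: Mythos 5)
Your proposal is correct and follows essentially the same route as the paper: the same first/second-difference identities for $\tilde{s}_A$ (the paper's Lemma \ref{lemma:sc} and equation \eqref{eq:so}), the same closed forms for the two Laplace transforms in terms of $(1-e^{-\mu(A_n)t})\sum_k e^{-\mu(A_n)kt}\tilde{s}_{A_n}(k)$ (your constant $\frac{1+\mu(A_n)-e^{\mu(A_n)t}}{\mu(A_n)}$ is the paper's $1-\frac{e^{\mu(A_n)t}-1}{\mu(A_n)}$), and the same appeal to the continuity theorem via continuity of $\frac{1}{1+t}$ at $t=0$. The only extra touch is your explicit elimination identity $\phi_{A_n}(t)=1-\frac{e^{\mu(A_n)t}-1}{\mu(A_n)}\phi_{A_n,X}(t)$, which is a pleasant but inessential remark.
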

Theorem \ref{thm:equiv} details the statement of this result. In particular it provides another way proving the limit distribution of the normalized return time is exponential if and only if the limit distribution of the normalized hitting time is exponential. This method can be used to deal with more general case of stopping times as shown in the last section.  

Generally speaking the sweep-out sequence and the power series in \eqref{eq:swoseries} are not easy to estimate, we have not yet been able to find new systems satisfying exponential limit distribution via this criterion. Nevertheless this power series suggests us to investigate the exponential decay of $\tilde{s}_A(k)$, namely
\begin{equation}\label{def:esr}
\rho_A:=\lim_{k\to\infty}-\frac{\log\tilde{s}_A(k)}{k}
\end{equation}
whenever this limit exists. It is called escape rate by many authors, for example \cite{by2011,fp2012,kl2009,k2012}. An heuristic argument of continuing \eqref{eq:swoseries} analytically to complex $t$ with $\Re t>-1$ hints at
\begin{equation}\label{eq:limesr}
\lim_{n\to\infty}\frac{\rho_{A_n}}{\mu(A_n)}=1,
\end{equation}
although it seems unlikely \eqref{eq:limesr} is in general a necessary or a sufficient condition of \eqref{eq:swoseries}. In fact such a limit in \eqref{eq:limesr} has been studied  in \cite{by2011} for full shifts, in \cite{fp2012} for conformal repellers and in \cite{kl2009,k2012} for systems with Rare-Event-Perron-Frobenius-Operators defined there. When \eqref{eq:limesr} holds for these examples, the limit distribution of the normalized return time turns out to be exponential, in other words \eqref{eq:swoseries} is true. We note that most of the systems been studied possess some kind of uniform spectral gap properties for transfer operators. In Theorem \ref{thm:rholim} below we prove \eqref{eq:limesr} by direct calculations in general $\psi$-mixing systems for a class of targets (Definition \ref{def:cls}). Additionally the example we calculate for does satisfy \eqref{eq:swoseries} as well. Different from methods employed in the aforementioned results, our calculation does not involve transfer operators despite that it depends heavily on the $\psi$-mixing property. 

\section{Sweep-out sequence and limit distribution of normalized return times}\label{sec:swo}
Let $(X,\mathcal B,\mu)$ be a probability space and let $T$ be a measure-preserving and ergodic transformation. For a set $A$ with positive measure, the return time (and the hitting time) map $\tau_A$ is defined as in the \eqref{def:return}. Whether $\tau_A$ means return time or hitting time will be clear by context. 
\begin{definition}
We define the sweep-out sequence $\{\tilde{s}_A(k)\}_{k=0}^\infty$ of $A$ to be 
$$\tilde{s}_A(k):=\mu(A^c\cap\cdots\cap T^{-k+1}A^c) ~\text{ for every } k\ge 1$$ 
and $\tilde{s}_A(0):=1$.
\end{definition}
It is clear that $\{\tilde{s}_A(k)\}_{k=0}^\infty$ is a decreasing sequence. A set $A$ is called a sweep-out set in \cite{dgs1976} if $\lim_{k\to\infty}\tilde{s}_A(k)=0$. Notice that the system is ergodic if and only if every positively measured set is a sweep-out set. Hence for a fixed $A$, $\{\tilde{s}_A(k)\}_{k=0}^{\infty}$ is a sequence decreasing from $1$ to $0$. Also it is clear that $\tilde{s}_{A}(k)$ is bounded from below by $\max\{1-k\mu(A),0\}$. The bound $1-k\mu(A)$ is achieved when $A,\ldots,T^{-k+1}A$ are pairwise disjoint. Since $\mu$ is invariant by $T$, we can write 
\begin{align*}\tilde{s}_A(k)&=\mu(A^c\cap\ldots\cap T^{-k+1}A^c)=\mu(T^{-1}(A^c\cap\cdots\cap T^{-k+1}A^c))\\&=\mu(\tau_A>k).\end{align*}
\begin{remark} 
 In infinite ergodic theory $\{s_A(k)\}$ is named wandering rate. It can be used to describe return sequences for rationally ergodic transformations, see for example \cite{a1997}. 
\end{remark}

We briefly recall the results from \cite{hsv1999} and \cite{hlv2005}. There the difference between limit distribution of $\mu(A)\tau_A$ and the exponential distribution is estimated by the difference $\{c_A(k)\}$ between (un-normalized) hitting time distribution and return time distribution, $$c_A(k):=\mu(\tau_A>k)-\mu_A(\tau_A>k).$$
Let $c_A :=\sup_{k\ge0}|c_A(k)|$ and $\tilde{c}_A :=\sup_{t\ge0}|\mu_A(\mu(A)\tau_A>t)-e^{-t}|$. Theorem 2.1 in \cite{hsv1999} states that
\begin{equation*}
\tilde{c}_A\le 4\mu(A)+c_A(1-\log c_A)
\text{ and }c_A\le 2\mu(A)+ \tilde{c}_A(2-\log\tilde{c}_A).
\end{equation*}
Therefore when $\mu(A)$ shrinks to $0$, $\tilde{c}_A$ converges to $0$ if and only if $c_A$ converges to $0$. That is to say the limit distribution of the normalized return time is exponential if and only if the limit of $c_A$ is $0$. A result in \cite{hlv2005} is that the limit distribution of the normalized return time is exponential if and only if the limit distribution of the normalized hitting time is also exponential. The latter half rephrased in symbols is  $\sup_{t\ge0}|\mu(\mu(A)\tau_A>t)-e^{-t}|\to 0$. Using the notation of sweep-out sequence, it is equivalent to
\begin{equation}\label{eq:equiv1}
\lim_{\mu(A)\to0}\sup_k|\tilde{s}_A(k)-e^{-\mu(A)k}|=0.\end{equation}
Observe that $c_A$(k) can be represented by sweep-out sequence.
\begin{lemma}\label{lemma:sc}
\begin{equation*}\tilde{s}_A(k)-(1-\mu(A))\tilde{s}_A(k-1)=\mu(A) c_A(k-1).\end{equation*}
\end{lemma}
\begin{proof}
By definition we have 
\begin{align*}\tilde{s}_A(k)&=\mu(A^c\cap\cdots\cap T^{-k+1}A^c)\\
&=\mu(T^{-1}A^c\cap\cdots\cap T^{-k+1}A^c)-\mu(A\cap T^{-1}A^c\cap\cdots\cap T^{-k+1}A^c)\\
&=\mu(\tau_A>k-1)-\mu(A)\mu_A(\tau_A>k-1)\\
&=(1-\mu(A))\mu(\tau_A>k-1)+\mu(A)c_A(k-1)\\
&=(1-\mu(A))\tilde{s}_A(k-1)+\mu(A)c_A(k-1).
\end{align*}
\end{proof}
\begin{remark}
A recursive sum of this identity implies $$\tilde{s}_A(k)=(1-\mu(A))^{k}+\mu(A)\sum_{j=0}^{k-1}(1-\mu(A))^{k-1-j}c_A(j).$$
Another identity contained in the proof is 
\begin{equation}\label{eq:so}
\tilde{s}_A(k-1)-\tilde{s}_A(k)=\mu(A)\mu_A(\tau_A>k-1).
\end{equation}
\end{remark}

Having collected the results from \cite{hsv1999,hlv2005} along with Lemma \ref{lemma:sc}, we have
\begin{corollary}
As  $\mu(A)\to0$, the distribution of $\mu(A)\tau_A$ converges to the exponential distribution with parameter $1$ if and only if
\begin{equation}\label{eq:equiv2}\tilde{s}_A(k)-(1-\mu(A))\tilde{s}_A(k-1)=o(\mu(A)), \text{uniformly in } k.\end{equation}
\end{corollary}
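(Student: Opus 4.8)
The plan is to read the corollary off directly from Lemma~\ref{lemma:sc} together with the two estimates of \cite{hsv1999} recalled just above it. Lemma~\ref{lemma:sc} gives, for every $k\ge1$,
\[
\tilde{s}_A(k)-(1-\mu(A))\tilde{s}_A(k-1)=\mu(A)\,c_A(k-1).
\]
Taking absolute values and then the supremum over $k\ge1$, so that $k-1$ runs over all $j\ge0$, I obtain
\[
\sup_{k\ge1}\bigl|\tilde{s}_A(k)-(1-\mu(A))\tilde{s}_A(k-1)\bigr|=\mu(A)\sup_{j\ge0}|c_A(j)|=\mu(A)\,c_A.
\]
Hence condition \eqref{eq:equiv2}, namely that $\tilde{s}_A(k)-(1-\mu(A))\tilde{s}_A(k-1)=o(\mu(A))$ uniformly in $k$, says precisely that $\mu(A)c_A=o(\mu(A))$ as $\mu(A)\to0$, which is the same as $c_A\to0$.

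It then remains to match ``$c_A\to0$'' with convergence in distribution of $\mu(A)\tau_A$ to the exponential law. This is exactly the content of Theorem~2.1 of \cite{hsv1999}: the inequalities $\tilde{c}_A\le4\mu(A)+c_A(1-\log c_A)$ and $c_A\le2\mu(A)+\tilde{c}_A(2-\log\tilde{c}_A)$ show that $c_A\to0$ if and only if $\tilde{c}_A\to0$, and $\tilde{c}_A\to0$ is by definition the statement $\sup_{t\ge0}|\mu_A(\mu(A)\tau_A>t)-e^{-t}|\to0$. By the equivalence from \cite{hlv2005} this is in turn equivalent to $\sup_{t\ge0}|\mu(\mu(A)\tau_A>t)-e^{-t}|\to0$, so the hitting-time version is handled simultaneously. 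Chaining these equivalences yields the corollary.

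As for the main obstacle: there is none of substance. Once Lemma~\ref{lemma:sc} is in hand the corollary is essentially a bookkeeping statement, and all the genuine analysis — the logarithmic bounds relating $c_A$ and $\tilde{c}_A$, and the return-time/hitting-time equivalence — is imported from \cite{hsv1999,hlv2005}. The only points needing a little care are the reindexing $k\mapsto k-1$ inside the supremum, and reading ``$o(\mu(A))$ uniformly in $k$'' in \eqref{eq:equiv2} as a statement about the single quantity $\sup_k|\tilde{s}_A(k)-(1-\mu(A))\tilde{s}_A(k-1)|$ rather than about each $k$ separately; both are routine.
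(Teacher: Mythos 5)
Your argument is correct and is exactly how the paper intends the corollary to be read: the paper gives no separate proof, presenting it as an immediate consequence of Lemma~\ref{lemma:sc} (which identifies the left side of \eqref{eq:equiv2} with $\mu(A)c_A(k-1)$) combined with the recalled estimates from \cite{hsv1999} and the return-time/hitting-time equivalence from \cite{hlv2005}. Your bookkeeping — reindexing the supremum and reducing \eqref{eq:equiv2} to $c_A\to0$ — matches the paper's intent precisely.
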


Both \eqref{eq:equiv1} and \eqref{eq:equiv2} are sufficient and necessary conditions for the normalized return time converging in distribution to the exponential distribution. Instead of estimating the difference between distribution functions, we go after the weak limit of the normalized return time by Laplace transforms as in \cite{h1993} and present another sufficient and necessary condition. Recall that the Laplace transform of the normalized return time $\mu(A)\tau_A$ on $(A, \mu_{A})$ is $$ \phi_{A}(t)=\int_{A} e^{-\mu(A)\tau_A t}d\mu_{A}, \text{ for every } t>0,$$
and the Laplace transform of the normalized hitting time $\mu(A)\tau_A$ on $(X,\mu)$ is $$ \phi_{A, X}(t)=\int_X e^{-\mu(A)\tau_A t}d\mu, \text{ for every } t>0.$$
We rewrite the Laplace transforms using the sweep-out sequence $\tilde{s}_A(k)$.
\begin{lemma}\label{lem:laplace}
\begin{gather*}
\phi_{A}(t)=1-\frac{e^{\mu(A)t}-1}{\mu(A)}+\frac{e^{\mu(A)t}-1}{\mu(A)}(1-e^{-\mu(A)t})\sum_{k=0}^\infty e^{-\mu(A)kt}\tilde{s}_A(k),\\
\phi_{A,X}(t)=1-(1-e^{-\mu(A)t})\sum_{k=0}^{\infty}e^{-\mu(A)kt}\tilde{s}_A(k).
\end{gather*}
\end{lemma}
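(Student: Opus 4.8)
The plan is to derive both identities from a single elementary fact about Laplace transforms of $\mathbb{N}$-valued random variables. Suppose $\tau\ge 1$ is integer-valued on a probability space $(\Omega,\mathbb{P})$ and $s>0$. Writing $\mathbb{P}(\tau=k)=\mathbb{P}(\tau>k-1)-\mathbb{P}(\tau>k)$ and performing an Abel summation in
$$\mathbb{E}[e^{-s\tau}]=\sum_{k=1}^\infty e^{-sk}\mathbb{P}(\tau=k),$$
one obtains, after reindexing and using $\mathbb{P}(\tau>0)=1$,
$$\mathbb{E}[e^{-s\tau}]=1-(1-e^{-s})\sum_{k=0}^\infty e^{-sk}\,\mathbb{P}(\tau>k).$$
The manipulations are harmless: since $0\le\mathbb{P}(\tau>k)\le 1$ and $s>0$, the series $\sum_k e^{-sk}\mathbb{P}(\tau>k)$ has a geometric majorant and converges absolutely, the boundary term $e^{-sN}\mathbb{P}(\tau>N)$ produced by the partial summation tends to $0$, and the interchange giving the first display is monotone convergence (one does not even need $\tau<\infty$ a.e., as $e^{-s\cdot\infty}=0$; in our setting $\tau_A<\infty$ a.e.\ anyway by Poincar\'e recurrence and ergodicity).

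For the hitting-time transform I apply this with $(\Omega,\mathbb{P})=(X,\mu)$, $\tau=\tau_A$, and $s=\mu(A)t$; since $\mathbb{P}(\tau_A>k)=\tilde s_A(k)$ (the identity recorded just after Definition~1), this is exactly the stated expression for $\phi_{A,X}(t)$. For the return-time transform I apply the same identity with $(\Omega,\mathbb{P})=(A,\mu_A)$ and $s=\mu(A)t$, giving $\phi_A(t)=1-(1-e^{-s})\sum_{k\ge 0}e^{-sk}\mu_A(\tau_A>k)$, and then eliminate $\mu_A(\tau_A>k)$ via \eqref{eq:so}, i.e.\ $\mu(A)\mu_A(\tau_A>k)=\tilde s_A(k)-\tilde s_A(k+1)$. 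A telescoping/reindexing step gives
$$\sum_{k=0}^\infty e^{-sk}\bigl(\tilde s_A(k)-\tilde s_A(k+1)\bigr)=(1-e^{s})\sum_{k=0}^\infty e^{-sk}\tilde s_A(k)+e^{s},$$
and substituting this, together with the algebraic simplifications $(1-e^{-s})e^{s}=e^{s}-1$ and $-(1-e^{-s})(1-e^{s})=(e^{s}-1)(1-e^{-s})$, rearranges $\phi_A(t)$ precisely into the claimed form with $e^{\mu(A)t}-1$ in place of $e^{s}-1$.

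I do not expect a genuine obstacle: the argument is bookkeeping with absolutely convergent, essentially geometric, series. The only two points deserving explicit mention are the absolute convergence of $\sum_k e^{-\mu(A)kt}\tilde s_A(k)$ for each fixed $t>0$ — immediate from $\tilde s_A(k)\le 1$ — and careful tracking of the boundary terms produced by the two reindexings (the $+e^{s}$ above, and the $+1$ in the general identity), since a sign or index slip there is exactly what would spoil agreement with the stated formulas. As a consistency check one can observe that letting $t\to 0^{+}$ sends both $\phi_A(t)$ and $\phi_{A,X}(t)$ to $1$, and that subtracting the two expressions reproduces the relation between $c_A$ and the sweep-out sequence from Lemma~\ref{lemma:sc}.
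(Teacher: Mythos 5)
Your proposal is correct and follows essentially the same route as the paper: both arguments are an Abel summation of the Laplace transform using $\mu(\tau_A>k)=\tilde s_A(k)$ for the hitting time and the identity \eqref{eq:so} for the return time, followed by the same reindexing and algebraic simplification. Packaging the summation-by-parts step as a single general identity for integer-valued random variables applied to both $\mu$ and $\mu_A$ is a tidy way to organize the same computation, and your bookkeeping (the boundary terms $+1$ and $+e^{s}$, and the absolute convergence from $\tilde s_A(k)\le 1$) checks out.
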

\begin{proof}
Note that \eqref{eq:so} implies $$\mu_{A}(\tau_A=k)=\frac{1}{\mu(A)}[(\tilde{s}_A(k-1)-\tilde{s}_A(k))-(\tilde{s}_A(k)-\tilde{s}_A(k+1))].$$
Hence \begin{align*}
\phi_A(t)&=\sum_{k=1}^\infty e^{-\mu(A)kt}\mu_A(\tau_A=k)\\
&=\frac{1}{\mu(A)}\sum_{k=1}^\infty e^{-\mu(A)kt}[(\tilde{s}_A(k-1)-\tilde{s}_A(k))-(\tilde{s}_A(k)-\tilde{s}_A(k+1))]\\
&=1-\frac{1-e^{-\mu(A)t}}{\mu(A)}\sum_{k=0}^\infty e^{-\mu(A)kt}(\tilde{s}_A(k)-\tilde{s}_A(k+1))\\
&=1-\frac{e^{\mu(A)t}-1}{\mu(A)}+\frac{e^{\mu(A)t}-1}{\mu(A)}(1-e^{-\mu(A)t})\sum_{k=0}^\infty e^{-\mu(A)kt}\tilde{s}_A(k).
\end{align*}
For the other one the obvious equality $$\mu(\tau_A=k)=\tilde{s}_A(k-1)-\tilde{s}_A(k)$$ is used to obtain
\begin{align*}
\phi_{A,X}(t)&=\sum_{k=1}^\infty e^{-\mu(A)kt}\mu(\tau_A=k)\\
&=\sum_{k=1}^\infty e^{-\mu(A)kt}(\tilde{s}_A(k-1)-\tilde{s}_A(k))\\
&=1-(1-e^{-\mu(A)t})\sum_{k=0}^{\infty}e^{-\mu(A)kt}\tilde{s}_A(k).
\end{align*}
\end{proof}

Our main theorem in this section is an immediate consequence. 
\begin{theorem} \label{thm:equiv} Let $(X,\mathcal B,\mu, T)$ be a probability-preserving and ergodic dynamical system. For every sequence of positively-measured sets $\{A_n\}$ with $\lim\limits_{n\to\infty}\mu(A_n)=0$,
the following statements are equivalent.
\begin{enumerate}
\item The distribution of $\mu(A_n)\tau_{A_n}$ on $(A_n,\mu_{A_n})$ converges to the exponential distribution with parameter $1$.
\item The distribution of $\mu(A_n)\tau_{A_n}$ on $(X,\mu)$ converges to the exponential distribution with parameter $1$.
\item For every $t>0$ \begin{equation}\label{eq:solaplace}\lim_{n\to\infty}(1-e^{-\mu(A_n)t})\sum_{k=0}^\infty e^{-\mu(A_n)kt}\tilde{s}_{A_n}(k)=\frac{t}{t+1}.
\end{equation}
\end{enumerate}
\end{theorem}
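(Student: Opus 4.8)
The plan is to deduce Theorem \ref{thm:equiv} almost mechanically from Lemma \ref{lem:laplace} together with the continuity theorem for Laplace transforms, using the results of \cite{hsv1999} and \cite{hlv2005} recalled above to close the loop between the two distributional statements. First I would observe that by the continuity theorem for Laplace transforms on $[0,\infty)$, a sequence of $[0,\infty]$-valued random variables converges in distribution to the exponential law with parameter $1$ if and only if their Laplace transforms converge pointwise on $(0,\infty)$ to $\int_0^\infty e^{-tx}e^{-x}\,dx = \frac{1}{t+1}$. (One should be slightly careful that mass may a priori escape to $+\infty$; since the limit $\frac{1}{t+1}\to 1$ as $t\to 0^+$, no mass escapes, so pointwise convergence of Laplace transforms to $\frac{1}{t+1}$ does force convergence in distribution to the genuine exponential law. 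This is the one routine point worth stating cleanly.)

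Next I would handle the equivalence (2)$\Leftrightarrow$(3). By the second formula in Lemma \ref{lem:laplace},
\[
\phi_{A_n,X}(t)=1-(1-e^{-\mu(A_n)t})\sum_{k=0}^{\infty}e^{-\mu(A_n)kt}\tilde{s}_{A_n}(k),
\]
so $\phi_{A_n,X}(t)\to\frac{1}{t+1}$ for every $t>0$ is literally the same as \eqref{eq:solaplace}. Combined with the continuity theorem from the previous paragraph applied to the hitting-time variables $\mu(A_n)\tau_{A_n}$ on $(X,\mu)$, this gives (2)$\Leftrightarrow$(3) at once.

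For (1)$\Leftrightarrow$(3) I would use the first formula in Lemma \ref{lem:laplace}. Writing $a_n := \mu(A_n)\to 0$ and $S_n(t):=(1-e^{-a_nt})\sum_{k\ge0}e^{-a_nkt}\tilde{s}_{A_n}(k)$, that formula reads $\phi_{A_n}(t)=1-\frac{e^{a_nt}-1}{a_n}\bigl(1-S_n(t)\bigr)$. Since $\frac{e^{a_nt}-1}{a_n}\to t$ as $n\to\infty$ for each fixed $t>0$, one checks directly that $\phi_{A_n}(t)\to\frac{1}{t+1}$ if and only if $1-S_n(t)\to\frac{1}{t(t+1)}$, i.e. if and only if $S_n(t)\to\frac{t}{t+1}$, which is \eqref{eq:solaplace}; here I must note that $S_n(t)$ stays bounded (indeed $0\le\tilde{s}_{A_n}(k)\le\min\{1,(1-a_n)^k\}$ by the remarks in Section \ref{sec:swo}, so $0\le S_n(t)\le 1$), which prevents the factor $\frac{e^{a_nt}-1}{a_n}\to t$ from interacting badly with a possibly divergent $1-S_n(t)$. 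Then another application of the continuity theorem, now to the return-time variables on $(A_n,\mu_{A_n})$, yields (1)$\Leftrightarrow$(3). Having both (1)$\Leftrightarrow$(3) and (2)$\Leftrightarrow$(3) completes the cycle; as a byproduct (1)$\Leftrightarrow$(2), recovering the \cite{hlv2005} equivalence.

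The only genuinely delicate point — and the one I would spend care on — is the boundedness/uniform-integrability bookkeeping: making sure that in passing between $\phi_{A_n}$ or $\phi_{A_n,X}$ and the series $S_n(t)$ the limits can be taken termwise and that no probability mass is lost to infinity. Once the elementary bounds $0\le\tilde{s}_{A_n}(k)\le(1-\mu(A_n))^k$ are invoked, everything else is an exercise in taking limits of explicit expressions, and the theorem drops out.
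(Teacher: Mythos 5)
Your proposal is correct and follows essentially the same route as the paper: rewrite both Laplace transforms via Lemma \ref{lem:laplace}, use $\frac{e^{\mu(A_n)t}-1}{\mu(A_n)}\to t$ together with the boundedness $0\le S_n(t)\le 1$ to show each transform converges to $\frac{1}{t+1}$ if and only if \eqref{eq:solaplace} holds, and conclude by the continuity theorem for Laplace transforms. One small caveat: the bound $\tilde{s}_{A_n}(k)\le(1-\mu(A_n))^k$ you attribute to Section \ref{sec:swo} is not stated there and is false in general (the remark after Lemma \ref{lemma:sc} shows $\tilde{s}_A(k)=(1-\mu(A))^k+\mu(A)\sum_j(1-\mu(A))^{k-1-j}c_A(j)$ with $c_A(j)$ possibly positive), but this is harmless since your argument only needs $\tilde{s}_{A_n}(k)\le 1$.
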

\begin{proof}
Provided the existence of the concerned limits, the preceding lemma implies 
\begin{gather*}\lim_{n\to\infty}\phi_{A_n}(t)=1-t+t\cdot\lim_{n\to\infty}(1-e^{-\mu(A_n)t})\sum_{k=0}^\infty e^{-\mu(A_n)kt}\tilde{s}_{A_n}(k),\\
\lim_{n\to\infty}\phi_{A_n, X}(t)=1-\lim_{n\to\infty}(1-e^{-\mu(A_n)t})\sum_{k=0}^\infty e^{-\mu(A_n)kt}\tilde{s}_{A_n}(k).\end{gather*}
Therefore $\phi_{A_n}(t)$ or $\phi_{A_n, X}(t)$ converges to $\frac1{t+1}$ if and only if \eqref{eq:solaplace} holds. Since the Laplace transform of the exponential distribution with parameter $1$ is exactly $\frac1{t+1}$, which is continuous at $t=0$, the claimed equivalence follows from the continuity theorem for Laplace transforms.
\end{proof}


In practice neither sweep-out sequence itself nor the power series in \eqref{eq:solaplace} is straightforward to estimate. Compared with the other necessary and sufficient conditions \eqref{eq:equiv1} and \eqref{eq:equiv2}, calculation-wise \eqref{eq:solaplace} may or may not have the edge over them. Nonetheless as discussed in the introduction the condition \eqref{eq:solaplace} hints at some phenomenon of escape rates which may characterize when do the normalized return times converge weakly to exponential distribution. As an example we calculate the sweep-out sequence for full shifts borrowing the tools from \cite{go1978,go1981}. Similar calculations have already appeared in \cite{by2011}.

\begin{example} Consider a full shift of $q$ symbols $(\Sigma^{\mathbb N}, T, \mu)$,  $q\ge 2$. $\mu$ is the usual uniform measure, in the sense that if $A$ is a cylinder set $[a_1\ldots a_l]$ then $\mu(A)=q^{-l}$. Introducing $f_A(n)$ as the number of strings of length $n$ that do not contain $A$, then 
\begin{equation}\label{eq:swoshift}\tilde{s}_A(k)=\mu(A^c\cap\ldots\cap T^{-k+1}A^c)=q^{-k-l}\cdot f_A(k+l).
\end{equation} According to \cite{go1981} Theorem 1.1 the generating function $F_A(z)=\sum_{n=0}^\infty f_A(n)z^{-n}$ is a rational function:$$F_A(z)=\frac{z\cdot h_A(z)}{1+(z-q)h_A(z)}$$ where $h_A(z)$ is the auto-correlation function of $A$ as defined in \cite{go1981}. It is sufficient for our purpose to know that $h_A(z)$ is a polynomial in $z$ with coefficients $0$ or $1$ and of degree $l-1$, $l$ being the length of $A$. Lemmas 3, 4 and 5 in \cite{go1978} show that there is a constant $c$ depending only on $q$ such that if $l\ge c$ then $1+(z-q)h_A(z)$ has exactly one root $r_A$ with $|r_A|\ge1.7$. 
The root can be expanded as: \begin{equation}
r_A=q-\frac{1}{h_A(q)}-\frac{h_A'(q)}{h_A^3(q)}+O(\frac{l^2}{q^{3l}}),\label{eq:root}\end{equation}
whence \begin{equation}\label{eq:pattern}f_A(n)=\frac{r_A^n}{1-(r_A-q)^2h_A'(r)}+O(1.7^n).\end{equation}
The constants indicated by $O(\cdot)$ depend on $q$ but are independent of $n$ or $A$, i.e. $h_A$ or $l$. Therefore by \eqref{eq:pattern} the escape rate 
\begin{align}\rho_A&:=-\lim_{k\to\infty}\frac{1}{k}\log\tilde{s}_A(k)=-\lim_{k\to\infty}\frac{1}{k}\log (q^{-k-l}\cdot f_A(k+l))\notag\\
&=-\lim_{k\to\infty}\frac{1}{k}\log \left(\frac{1}{1-(r_A-q)^2h_A'(r)}(\frac{r_A}{q})^{k+l}+O((\frac{1.7}{q})^{k+l})\right)\notag\\&=-\log r_A+\log q.\label{eq:erfs}\end{align}
We estimate $\tilde{s}_A(k)$ against $e^{-k\rho_A}$,
$$\begin{aligned}\tilde{s}_A(k)-e^{-k\rho_A}&=\frac{1}{1-(r_A-q)^2h_A'(r_A)}(\frac{r_A}{q})^{k+l}+O((\frac{1.7}{q})^{k+l})-(\frac{r_A}{q})^k\\
&=\left(\frac{1}{1-(r_A-q)^2h_A'(r_A)}(\frac{r_A}{q})^{l}-1\right)(\frac{r_A}{q})^k+(\frac{1.7}{q})^{l} O((\frac{1.7}{q})^{k}).\end{aligned}$$
Use $\eqref{eq:root}$ to estimate $$\frac{1}{1-(r_A-q)^2h_A'(r_A)}(\frac{r_A}{q})^{l}=\frac{1}{1-(\frac{r_A}{q})^{l-2}O(\frac{l}{q^{l}})}(\frac{r_A}{q})^{l}$$
and $$(\frac{r_A}{q})^{l}=(1-O(\frac{1}{q^{l}}))^{l}.$$

Consider a sequence of cylinders $A_n$ shrinking to a single point with their lengths $l_{A_n}$ growing to infinity. We deduce from previous estimates that 
\begin{equation}\label{eq:swoesr}
\lim_{n\to\infty}\sup_k|\tilde{s}_{A_n}(k)-e^{-k\rho_{A_n}}|= 0.
\end{equation} Thus in this scenario whether the normalized return times $\mu(A_n)\tau_n=q^{-l_{A_n}}\tau_n$ converges in distribution to the exponential distribution is a matter of the escape rates. As for escape rates, \eqref{eq:root} and \eqref{eq:erfs} imply
$$\lim\limits_{n\to\infty}\frac{\rho_{A_n}}{\mu(A_n)}=\lim\limits_{n\to\infty}\frac{q^{l_{A_n}}}{qh_{A_n}(q)}.$$ By investigating the auto-correlation function, it is easy to show that this limit is either $1$ when  $A_n$ shrinks to an aperiodic point or is $1+q^{-m}$ when $A_n$ shrinks to an $m$-periodic point. In view of \eqref{eq:swoesr}, this limit is forced equal to $1$ if the limit distribution of normalized return times is exponential, i.e. if \eqref{eq:solaplace} holds.
\end{example}

\section{Escape rates in $\psi$-mixing systems}
With Theorem \ref{thm:equiv} in mind, we now restrict ourselves in $\psi$-mixing systems and move toward \eqref{eq:limesr}. A common tool to study escape rates is the Perron-Frobenius transfer operator, since an escape rate is just the logarithm of the largest eigenvalue of a perturbation of the transfer operator when a spectral gap is present. Our approach exploits the $\psi$-mixing property alone, leaving out a need for spectral gaps.

Suppose $(X, \mathcal B, T, \mu)$ is a probability-preserving and ergodic system with a filtration $\{\mathcal B_n\}_{n\in\mathbb N}$ of $\mathcal B$, where $\mathcal B_n\subset\mathcal B$ and $T^{-1}\mathcal B_n\subset \mathcal B_{n+1}$ for every $n$. Take a set $A\in\mathcal B$. To shorten notations sometimes we use
\begin{equation*}\tilde{A}^k:=A^c\cap\cdots\cap T^{-k+1}A^c ~\text{ and }~ A^k:=A\cup\cdots\cup T^{-k+1}A.\end{equation*} Assume that $\mu$ has the following mixing property with respect to $\{\mathcal B_n\}_{n\in\mathbb N}$.
\begin{definition}[$\psi$-mixing]
There exists $\psi: \mathbb N\to\mathbb R_+$ such that 
\begin{enumerate}
\item $\psi_n\to 0$ as $n\to\infty$, and
\item for every $n,m,k\in\mathbb N$, any $A\in\mathcal B_n$ and $B\in\mathcal B_m$,
$$|\mu(A\cap T^{-k-n} B)-\mu(A)\mu(B)|\le \psi_k\mu(A)\mu(B).$$
To simplify our calculation we also assume that 
\item $\psi_1=\max_{n\in\mathbb N}\{\psi(n)\}<\infty$.
\end{enumerate}
\end{definition}

We attempt to figure out the asymptotic decay rate of the sweep-out sequence of $A$ as $A$ evolves. Let $n_A$ be the least integer such that $A\in\mathcal B_{n_A}$. The $\psi$-mixing property implies that the sweep out sequence decays exponentially. 

\begin{proposition}\label{prop:exist-rho}
For a $\psi$-mixing system as above and any positively-measured $A\in\mathcal B$ such that $\tilde{s}_A(k)>0$ for all $k\in\mathbb N$, the escape rate $$-\lim_{k\to\infty}\frac{\log \tilde{s}_A(k)}{k}:=\rho_A$$ exists. 
\end{proposition}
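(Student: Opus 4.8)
The plan is to show that the sequence $a_k := -\log \tilde s_A(k)$ is, up to a bounded additive error, subadditive, and then to invoke Fekete's subadditive lemma. The natural submultiplicativity to aim for is $\tilde s_A(k+m) \approx \tilde s_A(k)\,\tilde s_A(m)$, which would make $a_k$ nearly additive; but because the two blocks $\tilde A^k$ and $T^{-k}\tilde A^m$ overlap in only a controlled way and the $\psi$-mixing estimate requires a gap of length $n_A$ between the two $\sigma$-algebras, one only gets a clean inequality after inserting a buffer of length $n_A$. So first I would write, for any $k, m \ge 1$,
\[
\tilde A^{k+m} \subset \tilde A^{k} \cap T^{-k-n_A}\tilde A^{m},
\]
using that $\tilde A^{k+m}$ is contained in the intersection of its first $k$ coordinates' constraint and a shifted copy of the last $m$; the $n_A$ extra coordinates in between are simply dropped (this uses $\tilde s_A$ decreasing, so throwing away constraints only increases measure, in the right direction). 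Since $\tilde A^k \in \mathcal B_{k - 1 + n_A}$ — here I would carefully track that $A \in \mathcal B_{n_A}$ forces $\tilde A^k \in \mathcal B_{k-1+n_A}$ via $T^{-j}\mathcal B_{n_A} \subset \mathcal B_{n_A + j}$ — and $\tilde A^m \in \mathcal B_{m-1+n_A}$, the $\psi$-mixing bound with gap parameter $k$ (so that the shift is $k + (k-1+n_A) \ge k + n_A$... I need the shift to be at least $(k-1+n_A) + (\text{gap})$; choosing the gap so the total shift is $k+n_A$ works) gives
\[
\mu\bigl(\tilde A^k \cap T^{-k-n_A}\tilde A^m\bigr) \le (1 + \psi_1)\,\tilde s_A(k)\,\tilde s_A(m).
\]
Combining, $\tilde s_A(k+m) \le (1+\psi_1)\,\tilde s_A(k)\,\tilde s_A(m)$, i.e. $a_{k+m} \ge a_k + a_m - \log(1+\psi_1)$.

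Next I would note this is exactly the hypothesis of (the reverse form of) Fekete's lemma applied to $b_k := a_k - \log(1+\psi_1) = -\log\bigl((1+\psi_1)\tilde s_A(k)\bigr)$, which satisfies $b_{k+m} \ge b_k + b_m$ whenever $k, m$ are large enough — strictly, the derivation above is clean for all $k,m\ge 1$, so honest superadditivity holds outright. Fekete then yields that $\lim_{k\to\infty} b_k/k = \sup_k b_k/k$ exists in $(-\infty, +\infty]$, and hence $\lim_{k\to\infty} a_k/k$ exists; this limit is precisely $\rho_A$. Finally I would rule out the value $+\infty$ using the universal lower bound recorded in the excerpt, $\tilde s_A(k) \ge \max\{1 - k\mu(A), 0\}$ — this is no good for large $k$, so instead I would use submultiplicativity in the easy direction, $\tilde s_A(k) \ge$ something, or more simply observe $\tilde s_A(k) \ge (\text{const})\cdot\tilde s_A(k+1)$ type bounds; cleanest is: from $\tilde A^{k} \supset \tilde A^{k-1}\cap T^{-k+1}A^c$ and $\psi$-mixing the other way, $\tilde s_A(k) \ge (1-\psi_1)^{\,?}\,\tilde s_A(k-1)\tilde s_A(1)$ — but $1-\psi_1$ may be negative, so I would instead just use $\tilde s_A(k) \ge \tilde s_A(k-1) - \mu(A) \ge \cdots$ chained, or the blunt bound $\tilde s_A(k)\ge \mu(\tilde A^k)\ge$ a geometric lower bound obtained by one application of mixing with the crude estimate $\tilde s_A(k-1)\ge \tilde s_A(k)$; in any case a geometric lower bound $\tilde s_A(k)\ge c\,\theta^k$ with $\theta>0$ forces $\rho_A \le -\log\theta < \infty$. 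The only real subtlety — and the step I expect to require the most care — is the bookkeeping of which $\sigma$-algebra $\tilde A^k$ lives in and exactly how large a gap the $\psi$-mixing inequality needs, i.e. getting the shift index to line up as $k + n_A$ (or whatever the correct constant is) so that the gap parameter tends to infinity with $k$; everything else is Fekete plus the hypothesis $\tilde s_A(k) > 0$, which guarantees the logarithms are finite.
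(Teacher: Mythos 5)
Your overall strategy is exactly the one the paper uses: establish an almost-subadditivity of $-\log\tilde s_A$ by inserting a buffer of length $n_A$ so that the $\psi$-mixing inequality applies, then invoke a Fekete-type lemma. However, the key inequality as you state it is wrong. The containment
\[
\tilde A^{k+m}\subset \tilde A^{k}\cap T^{-k-n_A}\tilde A^{m}
\]
is false: the right-hand side constrains the coordinates $k+n_A,\dots,k+n_A+m-1$, which reach up to index $k+m+n_A-1$, beyond the range $0,\dots,k+m-1$ constrained by $\tilde A^{k+m}$. Dropping constraints enlarges a set, so the inclusion goes the other way only if you start from the longer block: the correct statement is $\tilde A^{k+m+n_A}\subset \tilde A^{k}\cap T^{-k-n_A}\tilde A^{m}$, which yields
\[
\tilde s_A(k+m+n_A)\le (1+\psi_1)\,\tilde s_A(k)\,\tilde s_A(m),
\]
not $\tilde s_A(k+m)\le(1+\psi_1)\tilde s_A(k)\tilde s_A(m)$. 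Consequently your assertion that ``honest superadditivity holds outright'' for $b_k$ is not justified; what you actually get is superadditivity with a fixed shift $n_A$ (equivalently, $a_{k+m+n_A}\ge a_k+a_m-\log(1+\psi_1)$ for $a_k=-\log\tilde s_A(k)$). This still suffices: the gap/almost-subadditive version of Fekete's lemma (or the substitution $g_k:=a_{k-n_A}-\log(1+\psi_1)$, which is genuinely superadditive, combined with monotonicity of $a_k$) gives existence of $\lim a_k/k$. This is precisely the lemma the paper proves and the step it calls ``well-known,'' so your proof is repaired by this correction rather than derailed — but as written the central inequality does not hold.

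Two smaller remarks. The $\sigma$-algebra bookkeeping you worry about is fine: $\tilde A^k\in\mathcal B_{n_A+k-1}$, so a shift of $k+n_A$ corresponds to gap parameter $1$ and the constant $1+\psi_1$ is correct. Your closing attempt to rule out $\rho_A=+\infty$ is both inconclusive as sketched (the chained bound $\tilde s_A(k)\ge 1-k\mu(A)$ is vacuous for large $k$, and the mixing lower bound needs $1-\psi_1>0$, which is not assumed) and unnecessary: the paper explicitly allows $\rho_A\in[0,+\infty]$, and Fekete delivers existence of the limit in the extended sense, which is all the proposition claims.
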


We omit the proof to this proposition for it is well-known as a direct outcome of the following (almost) sub-additive property.
\begin{lemma}
For any $m, k\in\mathbb N$, 
\begin{eqnarray}
\log\tilde{s}_A(m+k+n_A)\le \log\tilde{s}_A(m)+\log\tilde{s}_A(k)+\log(1+ \psi_1).\label{eq:psi-sq}
\end{eqnarray}
\end{lemma}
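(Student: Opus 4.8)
The plan is to expand $\tilde s_A(m+k+n_A)$ by conditioning on the first $m$ coordinates, inserting a gap of length $n_A$, and then applying the $\psi$-mixing inequality to split off the last $k$ coordinates. Concretely, write
\[
\tilde s_A(m+k+n_A)=\mu\bigl(\tilde A^m\cap T^{-m}A^c\cap\cdots\cap T^{-m-n_A+1}A^c\cap T^{-m-n_A}\tilde A^{k}\bigr)\le\mu\bigl(\tilde A^m\cap T^{-m-n_A}\tilde A^{k}\bigr),
\]
where I have simply dropped the $n_A$ intermediate constraints (they only shrink the set, so the inequality goes the right way). The event $\tilde A^m=A^c\cap\cdots\cap T^{-m+1}A^c$ lies in $\mathcal B_{m+n_A-1}$ because $A\in\mathcal B_{n_A}$ and $T^{-1}\mathcal B_j\subset\mathcal B_{j+1}$, so after the gap of $n_A$ steps the $\psi$-mixing hypothesis with separation $k':=n_A$ (or the appropriate shift so that the two blocks are separated by at least one unit of the $\psi$ index) applies. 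Wait — I should be careful about bookkeeping: the definition requires $A\in\mathcal B_n$, $B\in\mathcal B_m$, and controls $\mu(A\cap T^{-k-n}B)$; here the first block sits in $\mathcal B_{m+n_A-1}$, so inserting the gap of length $n_A$ makes the shift large enough that the second factor $T^{-(m+n_A)}\tilde A^{k}$ is decorrelated with error factor $\psi_{k_0}$ for some $k_0\ge 1$, hence at most $\psi_1$ by the assumed monotone bound $\psi_1=\max_n\psi(n)$. This yields
\[
\mu\bigl(\tilde A^m\cap T^{-m-n_A}\tilde A^{k}\bigr)\le(1+\psi_1)\,\mu(\tilde A^m)\,\mu(\tilde A^k)=(1+\psi_1)\,\tilde s_A(m)\,\tilde s_A(k).
\]
Taking logarithms of both ends gives \eqref{eq:psi-sq}.

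The key steps in order are therefore: (i) rewrite $\tilde s_A(m+k+n_A)$ as the measure of the intersection of three blocks — the first $m$ non-return constraints, the $n_A$ buffer constraints, and the final $k$ non-return constraints shifted by $m+n_A$; (ii) discard the buffer constraints to get an upper bound by the measure of a two-block intersection; (iii) identify the first block as an element of $\mathcal B_{m+n_A-1}$ (or $\mathcal B_m$ after noting $\tilde A^m\in\mathcal B_{m+n_A-1}$) and the second as a $T^{-(m+n_A)}$-image of a set in the $\sigma$-algebra, so that the separation between them is at least $n_A\ge 1$; (iv) apply the $\psi$-mixing bound and use $\psi(\cdot)\le\psi_1$; (v) take logs. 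Proposition \ref{prop:exist-rho} then follows from Fekete's subadditivity lemma applied to $a_m:=\log\tilde s_A(m)+\log(1+\psi_1)$, which satisfies the honest subadditive inequality $a_{m+k}\le a_m+a_k$ up to the shift $n_A$ — a standard variant that still forces $\lim -\frac{1}{k}\log\tilde s_A(k)$ to exist.

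The one genuine subtlety — the main obstacle — is the index bookkeeping in step (iii): making sure the buffer length $n_A$ is exactly what is needed so that after $\tilde A^m$ is placed in $\mathcal B_{m+n_A-1}$, the remaining block is separated by enough to invoke $\psi$-mixing. The filtration condition $T^{-1}\mathcal B_j\subset\mathcal B_{j+1}$ gives $T^{-j}A\in\mathcal B_{n_A+j}$, so $\tilde A^m=\bigcap_{j=0}^{m-1}T^{-j}A^c\in\mathcal B_{n_A+m-1}$; to correlate this with $T^{-(n_A+m)}\tilde A^k$ the shift $n_A+m$ exceeds $n_A+m-1$ by exactly $1$, so the $\psi$-mixing inequality applies with index $k_0=1$, giving the factor $(1+\psi_1)$ directly without even needing the $\psi(\cdot)\le\psi_1$ reduction (though that reduction is harmless and keeps the statement clean). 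Everything else is routine: dropping constraints is monotone, the product structure comes straight from the definition of $\tilde s_A$, and the passage to the escape-rate limit is the textbook subadditive argument.
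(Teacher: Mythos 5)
Your proof is correct and follows essentially the same route as the paper: drop the $n_A$ buffer constraints to bound $\tilde s_A(m+k+n_A)$ by $\mu(\tilde A^m\cap T^{-m-n_A}\tilde A^k)$, observe $\tilde A^m\in\mathcal B_{n_A+m-1}$, and apply $\psi$-mixing (your index bookkeeping correctly pins the mixing gap at $1$, which is exactly why the factor is $1+\psi_1$). The paper's proof is just a terser version of the same argument.
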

\begin{proof}
Because $$\tilde{s}_A(m+k+n_A)=\mu(\tilde{A}^{-m-k-n_A})\le\mu(\tilde{A}^m\cap T^{-m-n_A}\tilde{A}^k)$$
and because $\tilde{A}^m=A^c\cap\cdots\cap T^{-m+1}A^c\in \mathcal B_{n_A+m-1}$, \eqref{eq:psi-sq} follows from the $\psi$-mixing property.
\end{proof}

Clearly $\rho_A$ takes its value within $[0,+\infty]$. We show that for $A$ varying inside a certain class of sets $\rho_A$ is asymptotically $\mu(A)$. 
\begin{definition}\label{def:cls} For any $\epsilon>0$ consider $A\in\mathcal B$ for which we can find an integer $\ell_A$ such that
\begin{enumerate}
\item $\psi_{\ell_A}<\epsilon$,
\item $(n_A+\ell_A)\cdot\mu(A)<\epsilon$,
\item $n_A\cdot \sup_{1\le i\le n_A}\frac{\mu(A\cap T^{-i}A)}{\mu(A)}<\epsilon.$
\end{enumerate} 
Let $r_A:=n_A+\ell_A$. We denote the family of all such sets by $\mathcal A_\epsilon$.
\end{definition}
\begin{remark}
Examples can be found in \cite{de1995}.
\end{remark}
\begin{theorem}\label{thm:rholim}
If $A_n\in\mathcal A_{\frac 1n}$, then \begin{equation}\label{eq:limesrpsi}\lim_{n\to\infty}\frac{\rho_{A_n}}{\mu(A_n)}= 1.\end{equation}
\end{theorem}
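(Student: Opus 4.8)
The plan is to obtain matching upper and lower bounds on $\tilde s_{A_n}(k)$ of the form $(1-\mu(A_n))^{k}$ up to multiplicative errors that are controlled uniformly by the $\mathcal A_{1/n}$ conditions, and then pass to the limit in $-\frac1k\log\tilde s_{A_n}(k)$. The lower bound is the easy half: $\tilde s_A(k)\ge 1-k\mu(A)$ always, but more usefully one can iterate the $\psi$-mixing inequality in the \emph{reverse} direction, writing $\tilde s_A(m+k+n_A)\ge \mu(\tilde A^m\cap T^{-m-n_A}\tilde A^k)\ge(1-\psi_1)\tilde s_A(m)\tilde s_A(k)$ is too lossy, so instead I would break the orbit segment of length $k$ into blocks of length $r_A=n_A+\ell_A$ and estimate $\tilde s_A(r_A)$ directly. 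The key single-block estimate is that $\tilde s_A(r_A)=\mu(\tilde A^{r_A})$ is close to $1-r_A\mu(A)$, which by condition (3) (controlling $\mu(A\cap T^{-i}A)$ for $i\le n_A$) together with $\psi$-mixing (controlling overlaps at distances $>n_A$, via $\psi_{\ell_A}<\epsilon$) is $1-r_A\mu(A)(1+O(\epsilon))$; conditions (1) and (2) make all the error terms a fixed multiple of $\epsilon=1/n$ times $r_A\mu(A)$.

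The main technical step is then a telescoping/blocking argument: cover $\{0,1,\dots,k-1\}$ by $\lfloor k/r_A\rfloor$ consecutive blocks of length $r_A$, insert a gap of length $\ell_A$ between the $\mathcal B$-measurable prefix and the next block so the $\psi$-mixing estimate with gap $\ell_A$ applies, and get
\begin{equation*}
\bigl(\tilde s_A(r_A)(1-\psi_{\ell_A})\bigr)^{\lfloor k/r_A\rfloor}\ \le\ \tilde s_A(k)\ \le\ \bigl(\tilde s_A(r_A-\ell_A)(1+\psi_{\ell_A})\bigr)^{\lfloor k/r_A\rfloor},
\end{equation*}
up to a bounded number of boundary factors that are $\ge$ some fixed positive constant and $\le 1$. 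Taking $-\frac1k\log(\cdot)$ and letting $k\to\infty$ gives
\begin{equation*}
\frac{-\log\bigl(\tilde s_{A}(r_A)(1-\psi_{\ell_A})\bigr)}{r_A}\ \le\ \rho_{A}\ \le\ \frac{-\log\bigl(\tilde s_{A}(r_A-\ell_A)(1+\psi_{\ell_A})\bigr)}{r_A}.
\end{equation*}
Now divide by $\mu(A)$. Using $-\log(1-x)=x+O(x^2)$ and the single-block estimate $\tilde s_A(r_A)=1-r_A\mu(A)(1+O(\epsilon))$, both sides become $\frac{-\log(1-r_A\mu(A)(1+O(\epsilon)))}{r_A\mu(A)}+O(\psi_{\ell_A}/(r_A\mu(A)))$; since $r_A\mu(A)<\epsilon$ by condition (2), $\frac{-\log(1-x)}{x}\to1$ as $x\to0$, and the $\psi_{\ell_A}/(r_A\mu(A))$ term is the one place I must be careful — I would absorb it by noting $\psi_{\ell_A}<\epsilon$ and that $\ell_A$ can be chosen with $\ell_A\mu(A)$ small relative to nothing forces $r_A\mu(A)$ away from $0$, so one should keep the error as $\psi_{\ell_A}\le\epsilon$ against a denominator $r_A\mu(A)$ that need not vanish; here the cleaner route is to compare $\tilde s_A(r_A)$ with $(1-\mu(A))^{r_A}$ rather than with $1-r_A\mu(A)$, showing $\tilde s_A(r_A)=(1-\mu(A))^{r_A}(1+O(\epsilon))$, so that $\rho_A/\mu(A)=\frac{-\log\tilde s_A(r_A)}{r_A\mu(A)}+o(1)=\frac{-r_A\log(1-\mu(A))+O(\epsilon)}{r_A\mu(A)}+o(1)\to1$.

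The main obstacle, as just flagged, is bookkeeping the error terms so that \emph{all} of them are bounded by a fixed function of $\epsilon=1/n$ that tends to $0$, \emph{uniformly in $k$} before the $k\to\infty$ limit and uniformly over $A\in\mathcal A_\epsilon$ — in particular making sure the $\psi_{\ell_A}$-gap cost, which is incurred once per block, i.e.\ raised to the power $\lfloor k/r_A\rfloor$, contributes exactly $\psi_{\ell_A}/r_A$ to $\rho_A$ and hence $\psi_{\ell_A}/(r_A\mu(A))\le\epsilon/(r_A\mu(A))$ to $\rho_A/\mu(A)$, which is controlled once one checks $r_A\mu(A)$ cannot be too small relative to the way $\ell_A$ was selected; the overlap estimate from condition (3) and the subadditivity Lemma already in the paper handle the rest. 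I expect the proof to reduce to (a) the one-block lemma $\tilde s_A(r_A)=(1-\mu(A))^{r_A}(1+O(\epsilon))$, proved by inclusion–exclusion plus $\psi$-mixing plus conditions (1)--(3), and (b) the routine blocking inequality above; step (a) is where all the hypotheses in Definition~\ref{def:cls} get used and is the crux.
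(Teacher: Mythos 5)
Your blocking strategy points in the right direction, but the specific form of your blocking inequality has a gap that the hypotheses of Definition~\ref{def:cls} cannot close. You apply $\psi$-mixing directly to a product of block \emph{survival} probabilities, so each block carries a multiplicative factor $(1\pm\psi_{\ell_A})$ on a quantity close to $1$; after taking logarithms and dividing by $r_A\mu(A)$ this contributes an error of order $\psi_{\ell_A}/(r_A\mu(A))$ to $\rho_A/\mu(A)$. You flag this yourself and propose to control it by arguing that $r_A\mu(A)$ cannot be too small relative to how $\ell_A$ was selected, but Definition~\ref{def:cls} only imposes the \emph{upper} bounds $\psi_{\ell_A}<\epsilon$ and $r_A\mu(A)<\epsilon$; nothing prevents $r_A\mu(A)\ll\psi_{\ell_A}$ (e.g.\ a slowly decaying $\psi$ forcing $\ell_A$ to be large while $\mu(A)$ is extremely small), in which case your error term blows up. The paper sidesteps this by never factoring survival probabilities: it writes $\tilde s_A(m+n+r_A)=\tilde s_A(m)-\mu(\tilde A^m\cap T^{-m}A^{n+r_A})$ and applies $\psi$-mixing to the \emph{hitting} event $A^n=A\cup\cdots\cup T^{-n+1}A$, whose measure is $O(n\mu(A))$, so the mixing correction is $\psi_{\ell_A}\mu(A^n)$ --- a \emph{relative} error $\psi_{\ell_A}$ on the decrement, which is all that is needed. (For the same reason your claimed bound $\tilde s_A(k)\ge\bigl(\tilde s_A(r_A)(1-\psi_{\ell_A})\bigr)^{\lfloor k/r_A\rfloor}$ is not even directly derivable: a lower bound on the intersection of two survival events with no gap between them is not what $\psi$-mixing provides, and you cannot create a gap by dropping constraints when you want a lower bound; the paper obtains the corresponding estimate by an inductive bootstrap, with $q_A$ chosen so that $1-q_A^{-1}r_A\mu(A)(1+\psi_{\ell_A})\ge q_A$.)

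A second, independent loss occurs in your lower bound for $\rho_A$. To create the gap needed for mixing you discard the constraint on a segment of length $\ell_A$ out of every block of length $r_A=n_A+\ell_A$, so the surviving main term per block is $-\log\tilde s_A(n_A)\approx n_A\mu(A)$ spread over a block of length $r_A$, yielding only $\rho_A\gtrsim\frac{n_A}{r_A}\,\mu(A)$. Since Definition~\ref{def:cls} allows $\ell_A\gg n_A$, the ratio $n_A/r_A$ need not tend to $1$, and $\rho_{A_n}/\mu(A_n)\ge1-o(1)$ does not follow. This is exactly why the paper's Lemma~\ref{lem:rholower} works with long blocks of length $(k+1)r_A$, discards only one gap of length $r_A$ per block, and then sends $k\to\infty$ (choosing $k=[\sqrt n]$) so that the discarded fraction $\frac{1}{k+1}$ vanishes. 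Your one-block estimate via inclusion--exclusion and condition (3) of Definition~\ref{def:cls} is the right ingredient for bounding $\mu(A^{kr_A})$ from below, but it must be fed into this long-block scheme rather than into blocks of length $r_A$.
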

The proof will be built on the next two lemmas. We begin with an estimate of the upper bound.
\begin{lemma}\label{lem:rhoupper} For $\epsilon<0.1$ and any $A\in\mathcal A_\epsilon$ let 
$$q_A:=1-r_A\mu(A)\left(1+\psi_{\ell_A}+2r_A\mu(A)\right),$$ then
$$\rho_A\le -\frac 1 {r_A}\log q_A.$$
\end{lemma}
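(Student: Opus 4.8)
The plan is to bound $\tilde{s}_A(k)$ from above by a power of a single-block quantity using the $\psi$-mixing property, and then extract the escape rate. The natural first step is to compare $\tilde{s}_A(r_A)$ with a product: since $\tilde{A}^{m} \in \mathcal B_{n_A+m-1}$, repeated application of the $\psi$-mixing inequality along with the shift-invariance of $\mu$ gives, for any $j\ge 1$,
\begin{equation*}
\tilde{s}_A(j\cdot r_A) = \mu(\tilde{A}^{j r_A}) \le \mu\Bigl(\tilde{A}^{r_A - n_A} \cap \bigcap_{i=1}^{j-1} T^{-i r_A}\tilde{A}^{\,r_A - n_A}\Bigr) \le (1+\psi_{\ell_A})^{j-1}\, \tilde{s}_A(r_A - n_A)^{j},
\end{equation*}
where I have split off blocks of length $r_A = n_A + \ell_A$, using that a gap of $\ell_A$ between the block $\tilde{A}^{r_A-n_A}$ (which lies in $\mathcal B_{n_A + (r_A - n_A) - 1} = \mathcal B_{r_A - 1}$) and the next one allows the mixing estimate with coefficient $\psi_{\ell_A}$. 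Taking $-\frac{1}{k}\log$ of $\tilde{s}_A(k)$ along the subsequence $k = j r_A$ and letting $j\to\infty$ then yields $\rho_A \le -\frac{1}{r_A}\log \tilde{s}_A(r_A - n_A) + \frac{1}{r_A}\log(1+\psi_{\ell_A})$; since $\rho_A$ exists by Proposition \ref{prop:exist-rho}, the subsequential limit equals the full limit. (Alternatively one can feed this directly into the almost-subadditive bound \eqref{eq:psi-sq}.)

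The second step is to show $\tilde{s}_A(r_A - n_A) \ge q_A/(1+\psi_{\ell_A})$, or more directly to bound $\tilde{s}_A(r_A) \ge q_A$ and absorb the $n_A$ shift, so that $-\frac{1}{r_A}\log \tilde{s}_A(r_A) \le -\frac{1}{r_A}\log q_A$ closes the lemma. For this I would estimate the measure of the "bad" event $A^{r_A} = A \cup TA^{-1} \cup \cdots$ — i.e. hitting $A$ within the first $r_A$ steps — by a union bound combined with condition (3) of Definition \ref{def:cls}:
\begin{equation*}
\mu(A^{r_A}) = \mu\Bigl(\bigcup_{i=0}^{r_A-1} T^{-i}A\Bigr) \le r_A\,\mu(A),
\end{equation*}
but to get the sharper constant with the $\psi_{\ell_A}$ and $2 r_A\mu(A)$ correction terms appearing in $q_A$, I would instead use Bonferroni-type inclusion-exclusion: $\mu(\tilde A^{r_A}) \ge 1 - \sum_{i=0}^{r_A-1}\mu(T^{-i}A) + (\text{pairwise overlap terms controlled above}) - (\text{higher-order terms bounded by }\psi\text{-mixing})$. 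The pairwise terms $\mu(T^{-i}A \cap T^{-j}A)$ split according to whether $|i-j| \le n_A$ (controlled by condition (3), contributing $r_A \cdot n_A \cdot \sup_i \mu(A\cap T^{-i}A) \le r_A \epsilon \mu(A)$ after using shift-invariance) or $|i-j| > n_A$ (controlled by $\psi$-mixing, contributing roughly $(1+\psi_1)(r_A\mu(A))^2$), and the $2 r_A\mu(A)$ factor in $q_A$ is exactly what packages these two sources of second-order error.

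The main obstacle I anticipate is bookkeeping in this second step: getting the constants to line up precisely as $q_A = 1 - r_A\mu(A)(1 + \psi_{\ell_A} + 2 r_A\mu(A))$ rather than some looser bound requires care in how the overlap terms are grouped, in particular making sure the $\psi_{\ell_A}$ (not $\psi_1$) appears — which forces one to separate the indices within distance $\ell_A$ versus beyond, and to use the defining property $\psi_{\ell_A} < \epsilon$ only where a long gap is genuinely available. The hypothesis $\epsilon < 0.1$ is presumably used to guarantee $q_A > 0$ (so that $\log q_A$ makes sense) and that the discarded higher-order terms in the inclusion–exclusion are genuinely of lower order; I would verify $q_A \ge 1 - 3\epsilon > 0$ at the end. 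The first step (the block-splitting) is routine given the lemma \eqref{eq:psi-sq} already in hand.
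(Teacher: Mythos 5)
Your proposal has a genuine gap, and it sits at the heart of the lemma: the direction of your Step 1 is reversed. The chain
$\tilde{s}_A(jr_A)\le\mu\bigl(\tilde{A}^{\ell_A}\cap\bigcap_{i=1}^{j-1}T^{-ir_A}\tilde{A}^{\ell_A}\bigr)\le(1+\psi)^{j-1}\tilde{s}_A(\ell_A)^j$
is a correct \emph{upper} bound on $\tilde{s}_A(jr_A)$ (dropping constraints enlarges the set, and $\psi$-mixing bounds the intersection from above), but an upper bound on $\tilde{s}_A$ yields a \emph{lower} bound on $\rho_A=-\lim_k k^{-1}\log\tilde{s}_A(k)$, not the upper bound the lemma asserts. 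The conclusion you draw, $\rho_A\le -\frac{1}{r_A}\log\tilde{s}_A(r_A-n_A)+\frac{1}{r_A}\log(1+\psi_{\ell_A})$, simply does not follow; the inequality goes the other way. Your fallback of feeding this into the almost-subadditive bound \eqref{eq:psi-sq} has the same problem: subadditivity of $\log\tilde{s}_A$ only produces lower bounds on $\rho_A$ from finite-scale data. What the lemma actually requires is a \emph{lower} bound $\tilde{s}_A(mr_A)\ge q_A^m$, i.e.\ a supermultiplicative estimate, and that cannot be obtained by passing to supersets. (A secondary issue: with blocks of length $\ell_A$ lying in $\mathcal B_{r_A-1}$ and starting at multiples of $r_A$, the filtration gap is only $1$, so your splitting would produce $\psi_1$, not $\psi_{\ell_A}$.)

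The paper's argument supplies the missing engine by induction: write $\tilde{s}_A((m+1)r_A)=\tilde{s}_A(mr_A)-\mu(\tilde{A}^{mr_A}\cap T^{-mr_A}A^{r_A})$, enlarge the subtracted term by shortening the cylinder to $\tilde{A}^{(m-1)r_A}$ (this is what creates a genuine gap of length $\ell_A+1$ in the filtration), apply $\psi$-mixing and the union bound $\mu(A^{r_A})\le r_A\mu(A)$ to get the loss $\le(1+\psi_{\ell_A})\,\tilde{s}_A((m-1)r_A)\,r_A\mu(A)$, and then invoke the induction hypothesis $\tilde{s}_A((m-1)r_A)\le q_A^{-1}\tilde{s}_A(mr_A)$ to close the recursion; the quantity $q_A$ is engineered precisely so that $1-q_A^{-1}r_A\mu(A)(1+\psi_{\ell_A})\ge q_A$, which is where the correction terms $\psi_{\ell_A}$ and $2r_A\mu(A)$ come from. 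Your Step 2 is partly on target --- the base case really is the union bound $\tilde{s}_A(r_A)\ge 1-r_A\mu(A)\ge q_A$, and $\epsilon<0.1$ does guarantee $q_A>0$ --- but the Bonferroni refinement with the overlap terms from condition (3) of Definition \ref{def:cls} belongs to the \emph{lower} bound on $\rho_A$ (Lemma \ref{lem:rholower}), where one must bound $\mu(A^{kr_A})$ from below; it is not where the constants in $q_A$ originate.
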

\begin{proof}
 Note that $q_A$ is chosen as to satisfy 
$$1-q_A^{-1}r_A\mu(A)(1+\psi_{\ell_A})\ge q_A.$$ This can be easily checked using $A\in\mathcal A_\epsilon$. For any $m\ge 1$, we are about to show by induction 
\begin{equation}\label{eq:upperexp}
\tilde{s}_A(mr_A)\ge\tilde{s}_A((m-1)r_A)\cdot q_A.
\end{equation}
When $m=1$,
$\tilde{s}_A(r_A)\ge 1-r_A\mu(A)\ge q_A.$ Assume $\tilde{s}_A(mr_A)\ge q_A\cdot \tilde{s}_n((m-1)r_A)$ holds for some $m\ge 1$, then
\begin{align*}
\tilde{s}_A((m+1)r_A)&=\tilde{s}_A(mr_A)-\mu(\tilde{A}^{mr_A}\cap T^{-mr_A}A^{r_A})\\
&\ge \tilde{s}_A(mr_A)-\mu(\tilde{A}^{(m-1)r_A}\cap T^{-mr_A}A^{r_A})\\
&\ge \tilde{s}_A(mr_A)-\tilde{s}_A((m-1)r_A)\mu(A^{r_A})(1+\psi_{\ell_A})\\
&\ge \tilde{s}_A(mr_A)- q_A^{-1}\tilde{s}_A(mr_A)r_A\mu(A)(1+\psi_{\ell_A})\\
&=\tilde{s}_A(mr_A)(1-q_A^{-1}r_A\mu(A)(1+\psi_{\ell_A}))\\
&\ge \tilde{s}_A(mr_A)\cdot q_A.
\end{align*}
Therefore for any $m\ge 1$,
$\tilde{s}_A(mr_A)\ge q_A^m$ and $\rho_A\le -\frac{1}{r_A}\log q_A$ follows.
\end{proof}

\begin{lemma}\label{lem:rholower}
For $\epsilon<0.1$, any $A\in\mathcal A_\epsilon$ and integer $1\le k<\frac 1\epsilon$,
 $$\rho_A\ge -\frac{\log\left[1-kr_A\mu(A)\left(1-\epsilon-k\epsilon(1+\psi_1)\right)(1-\psi_{\ell_A})\right]}{(k+1)r_A}.$$
\end{lemma}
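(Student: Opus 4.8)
The plan is to mirror the argument of Lemma~\ref{lem:rhoupper}, but now pushing inequalities in the opposite direction to get a lower bound on $\tilde s_A(kr_A)$ that decays at a controlled geometric rate. Concretely, I would fix $A\in\mathcal A_\epsilon$ and try to show, by a one-step estimate followed by induction, that
\begin{equation*}
\tilde s_A\bigl((j+1)r_A\bigr)\le \tilde s_A(jr_A)\cdot p_A
\end{equation*}
for an appropriate contraction factor $p_A<1$, \emph{provided} $j$ stays below some threshold governed by $1/\epsilon$; this is why the statement restricts to $k<1/\epsilon$ and phrases the bound with $(k+1)r_A$ in the denominator. Iterating $k$ times gives $\tilde s_A\bigl((k+1)r_A\bigr)\le p_A^{\,k+1}$ after absorbing the trivial bound on the first block, and hence
\begin{equation*}
\rho_A=-\lim_{m\to\infty}\frac{\log\tilde s_A(m)}{m}\ge -\frac{\log p_A}{r_A}
\end{equation*}
once one checks (via Proposition~\ref{prop:exist-rho}, which guarantees the limit exists) that it suffices to evaluate along the subsequence $m=(k+1)r_A$. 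Matching $p_A$ to the displayed quantity, I expect $p_A = 1-kr_A\mu(A)(1-\epsilon-k\epsilon(1+\psi_1))(1-\psi_{\ell_A})$, with the three conditions of Definition~\ref{def:cls} used exactly to keep this factor in $(0,1)$.

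The engine of the one-step estimate is the identity already used in Lemma~\ref{lem:rhoupper}, namely $\tilde s_A((j+1)r_A)=\tilde s_A(jr_A)-\mu(\tilde A^{jr_A}\cap T^{-jr_A}A^{r_A})$, so everything reduces to a \emph{lower} bound on the ``loss term'' $\mu(\tilde A^{jr_A}\cap T^{-jr_A}A^{r_A})$. Since $\tilde A^{jr_A}\in\mathcal B_{n_A+jr_A-1}$ and $A^{r_A}=A\cup\cdots\cup T^{-r_A+1}A\in\mathcal B_{n_A+r_A-1}$, inserting a gap of length $\ell_A$ and applying $\psi$-mixing (in the two-sided form, using the lower tail $|\mu(\cdot)-\mu(\cdot)\mu(\cdot)|\le\psi_{\ell_A}\mu(\cdot)\mu(\cdot)$) yields
\begin{equation*}
\mu\bigl(\tilde A^{jr_A}\cap T^{-jr_A}A^{r_A}\bigr)\ge \tilde s_A(jr_A)\,\mu(A^{r_A})\,(1-\psi_{\ell_A}).
\end{equation*}
Wait — this bounds the loss term from above does not help; rather I need to split $T^{-jr_A}A^{r_A}$ so that the \emph{retained} set $\tilde A^{(j+1)r_A}$ is small, i.e. I want an upper bound $\tilde s_A((j+1)r_A)\le(\text{something})\cdot\tilde s_A(jr_A)$, which comes from a \emph{lower} bound on the loss. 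So the $\psi$-mixing inequality above is exactly the right direction, and then $\tilde s_A((j+1)r_A)\le \tilde s_A(jr_A)\bigl(1-\mu(A^{r_A})(1-\psi_{\ell_A})\bigr)$.

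The subtle point, and what I expect to be the main obstacle, is lower-bounding $\mu(A^{r_A})$ in terms of $r_A\mu(A)$: by inclusion–exclusion $\mu(A^{r_A})\ge r_A\mu(A)-\sum_{0\le i<i'<r_A}\mu(T^{-i}A\cap T^{-i'}A)$, and the double-sum overlap has to be controlled. For overlaps with gap at most $n_A$ one uses hypothesis~(3) of Definition~\ref{def:cls}, which bounds $n_A\sup_{1\le i\le n_A}\mu(A\cap T^{-i}A)/\mu(A)<\epsilon$; for overlaps with gap exceeding $n_A$ one uses $\psi$-mixing to get $\mu(T^{-i}A\cap T^{-i'}A)\le(1+\psi_1)\mu(A)^2$, contributing at most $r_A^2(1+\psi_1)\mu(A)^2$; combining and using $r_A\mu(A)<\epsilon$ from hypothesis~(2) produces $\mu(A^{r_A})\ge r_A\mu(A)(1-\epsilon-r_A\epsilon(1+\psi_1))$ up to reindexing, which after the $k$-fold iteration and the replacement of $r_A\epsilon$-type terms by the $k\epsilon$-type terms in the statement gives precisely $p_A$ as above. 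Keeping track of which $\epsilon$'s come from which hypothesis, and confirming that for $k<1/\epsilon$ all the parenthetical factors stay positive so that $\log p_A$ is well defined and the induction does not break, is the delicate bookkeeping; the dynamical input is entirely contained in the two applications of $\psi$-mixing and the three smallness conditions defining $\mathcal A_\epsilon$.
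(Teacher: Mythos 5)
Your overall architecture (a sub\-multiplicative recursion for $\tilde s_A$ along an arithmetic progression, a lower bound on the ``loss term'' via the two-sided $\psi$-mixing inequality after inserting a gap, and an inclusion--exclusion estimate of $\mu(A^{\cdot})$ using conditions (2) and (3) of Definition~\ref{def:cls}) is the right one, but your single-step recursion with step length $r_A$ cannot be carried out. To lower-bound $\mu(\tilde A^{jr_A}\cap T^{-jr_A}A^{r_A})$ by $\psi$-mixing you must pass to a subset of the target block that sits at distance at least $\ell_A$ past the $\sigma$-algebra $\mathcal B_{n_A+jr_A-1}$ containing $\tilde A^{jr_A}$; concretely you must discard the first $n_A+\ell_A=r_A$ sets of the union $T^{-jr_A}A\cup\cdots\cup T^{-jr_A-r_A+1}A$. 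But that discards the \emph{entire} block: nothing is left to intersect with, and the claimed inequality $\mu(\tilde A^{jr_A}\cap T^{-jr_A}A^{r_A})\ge\tilde s_A(jr_A)\mu(A^{r_A})(1-\psi_{\ell_A})$ is not obtained. This is exactly why the paper works with blocks of length $(k+1)r_A$: from $\tilde s_A(m+kr_A+r_A)=\tilde s_A(m)-\mu(\tilde A^m\cap T^{-m}A^{kr_A+r_A})$ one shrinks the target to $T^{-m-r_A}A^{kr_A}$, sacrificing the leading $r_A$ indices as the mixing gap, and gets $\tilde s_A(m(k+1)r_A)\le\tilde s_A((m-1)(k+1)r_A)\bigl(1-\mu(A^{kr_A})(1-\psi_{\ell_A})\bigr)$. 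The $(k+1)r_A$ in the denominator of the lemma is the true step length (numerator $\approx kr_A\mu(A)$, denominator $(k+1)r_A$ --- the efficiency loss $k/(k+1)$ is the price of the gap, recovered later by sending $k\to\infty$). In particular $k$ is a free block-length parameter, not a bound on the number of iterations; the iteration index $m$ must still go to infinity, with Proposition~\ref{prop:exist-rho} justifying evaluation of $\rho_A$ along the subsequence $m(k+1)r_A$.

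A second, related slip: the inclusion--exclusion must be performed on $\mu(A^{kr_A})$, not on $\mu(A^{r_A})$. The term $k\epsilon(1+\psi_1)$ in the statement arises directly as $(kr_A\mu(A))(1+\psi_1)\le k\epsilon(1+\psi_1)$ from the $\binom{kr_A}{2}$ pairwise overlaps with gap exceeding $n_A$ inside a single block of length $kr_A$; it is not produced by ``$k$-fold iteration'' of a bound on $\mu(A^{r_A})$. With the block length corrected, your treatment of the overlaps (condition (3) for gaps $\le n_A$, $\psi$-mixing with $\psi_1$ for larger gaps, condition (2) to convert $r_A\mu(A)$ into $\epsilon$) matches the paper and the restriction $k<1/\epsilon$ then serves only to keep $1-\epsilon-k\epsilon(1+\psi_1)$ and hence the argument of the logarithm under control.
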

\begin{proof}
First we note that for every $m, n\ge 1$,
\begin{align*}
\tilde{s}_A(m+n+r_A)&=\tilde{s}_A(m)-\mu(\tilde{A}^m\cap T^{-m}A^{n+r_A})\\
&\le \tilde{s}_A(m)-\mu(\tilde{A}^m\cap T^{-m-r_A}A^{n})\\
&\le \tilde{s}_A(m)-\tilde{s}_A(m)\mu(A^n)(1-\psi_{\ell_A}).
\end{align*}
Hence for every $m\ge 2, k\ge1$,
$$\tilde{s}_A(m(k+1)r_A)\le \tilde{s}_A((m-1)(k+1)r_A)\left(1-\mu(A^{kr_A})(1-\psi_{\ell_A})\right).$$
Let $\hat{p}_A(k):=1-\mu(A^{kr_A})(1-\psi_{\ell_A})$, apply the above inequality recursively 
\begin{equation}\label{eq:lowerexp}
\tilde{s}_A(m(k+1)r_A)\le \tilde{s}_A((k+1)r_A) \hat{p}_A(k)^{m-1}.
\end{equation}
Reformulate this inequality slightly
$$-\frac{\log \tilde{s}_A(m(k+1)r_A)}{m(k+1)r_A} \ge -\frac{\log \tilde{s}_A((k+1)r_A)}{m(k+1)r_A}-\frac{m-1}{m(k+1)r_A}\log \hat{p}_A(k).$$
Hence as $m\to\infty$, for every $k\ge 1$, $$\rho_A\ge -\frac{\log \hat{p}_A(k)}{(k+1)r_A}=-\frac{\log [1-\mu(A^{kr_A})(1-\psi_{\ell_A})]}{(k+1)r_A}.$$
It suffices to estimate $\mu(A^{kr_A}).$ The inclusion-exclusion principle implies
\begin{align*}
\mu(A^{kr_A})&\ge kr_A\mu(A)-\sum_{0\le i<j\le kr_A-1}\mu(T^{-i}A\cap T^{-j}A)\\
&=kr_A\mu(A)-\sum_{1\le i\le kr_A-1}(kr_A-i)\mu(A\cap T^{-i}A)\\
&=kr_A\mu(A)-(\sum_{1\le i\le n_A}+\sum_{n_A+1\le i\le kr_A-1})(kr_A-i)\mu(A\cap T^{-i}A)
\end{align*}
Due to our assumption of $A\in\mathcal A_\epsilon$, $n_A\cdot \sup_{1\le i\le n_A}\frac{\mu(A\cap T^{-i}A)}{\mu(A)}<\epsilon$ and $r_A\mu(A)<\epsilon$. Also the $\psi$-mixing property implies for $i\in[n_A+1, kr_A-1]$ that $\mu(A\cap T^{-i}A)\le \mu(A)^2(1+\psi_{i-n_A})$. Therefore we can continue the above estimate as follows.
\begin{align*}
\mu(A^{kr_A})&\ge kr_A\mu(A)-  \frac{n_A(2kr_A-1-n_A)}{2}\sup_{1\le i\le n_A}{\mu(A\cap T^{-i}A)}\\
&\qquad -\frac{(kr_A-n_A-1)(kr_A-n_A)}{2}\mu(A)^2(1+\psi_1)\\
&\ge kr_A\mu(A) - kr_A\mu(A) n_A \sup_{1\le i\le n_A}\frac{\mu(A\cap T^{-i}A)}{\mu(A)}\\
&\qquad - k^2r_A^2\mu(A)^2(1+\psi_1)\\
&\ge kr_A\mu(A)(1-\epsilon-k\epsilon(1+\psi_1)).
\end{align*}
\end{proof}

\begin{proof}[Proof of Theorem \ref{thm:rholim}]
With the help of $-\log(1-x)<(1+\epsilon)x$ for $0\le x<\epsilon/2$ and of $-\log(1-x)>x$ for all $0\le x<1$, the previous two lemmas lead to the theorem. Indeed, suppose $A_n\in\mathcal A_{\frac1n}$ and fix any $\varepsilon>0$. Then for $n>4/\varepsilon$,
$r_{A_n}\mu(A_n)(1+\psi_{\ell_{A_n}}+2r_{A_n}\mu(A_n))<\varepsilon/2$ and hence by Lemma \ref{lem:rhoupper}
\begin{align*}\frac{\rho_{A_n}}{\mu(A_n)}&\le \frac{1}{r_{A_n}\mu(A_n)}(1+\varepsilon) r_{A_n}\mu(A_n)(1+\psi_{\ell_{A_n}}+2r_{A_n}\mu(A_n))\\
&=(1+\varepsilon)(1+\psi_{\ell_{A_n}}+2r_{A_n}\mu(A_n))\le (1+\varepsilon)^2.
\end{align*}
On the other hand if we choose $k=[\sqrt{n}]$ in Lemma \ref{lem:rholower}, as $n\to\infty$
\begin{equation*}
\frac{\rho_{A_n}}{\mu(A_n)}\ge \frac{\sqrt{n}}{\sqrt{n}+1}\left(1-\frac{1}{n}-\frac {\sqrt{n}}{n}(1+\psi_1)\right)(1-\frac 1n)\ge 1-\varepsilon.
\end{equation*}
\end{proof}

\begin{remark}
The estimates \eqref{eq:upperexp} and \eqref{eq:lowerexp} strictly control how the sweep-out sequence decays at every step. One can infer that the same relation as \eqref{eq:swoesr} holds. Hence the necessary and sufficient condition \eqref{eq:solaplace} for exponential limit distribution of normalized return times is checked by \eqref{eq:limesrpsi}. 
\end{remark}

\section{Generalized hitting times}
We return to the settings in section \ref{sec:swo}. Our arguments there can be easily applied to more general hitting times. In fact writing the sweep-out sequence $\tilde{s}_{A}(k)$ as $\mu(\{1_{A}+1_{A}\circ T+\ldots+1_{A}\circ T^{k-1}=0\})$ suggests an apparent way. Henceforth we consider functions $f\in L^1(X,\mu)$ with $0\le f\le 1$ and $\|f\|_1>0$  and define $$\tau_f(x):=\inf\{k\ge 1: f\circ T+\cdots+f\circ T^k\ge1\}.$$
By Birkhoff's ergodic theorem, $\tau_f(x)$ is finite almost everywhere. We are interested in weak limit of $\mu(\supp f)\tau_f$ on the induced system $(\supp f, \mu_{\supp f})$. Studying this more general hitting times may benefit studying the usual return times by approximations. For simplicity sometimes we denote $\supp f$ by $A_f$ and  when $\mu(\supp f)$ is used as a scaling factor we denote it by $\epsilon_f$. 

Recall that the Laplace transforms of $\epsilon_f\tau_f$ on $(A_f, \mu_{A_f})$ and of $\epsilon_f\tau_f$ on $(X,\mu)$ are $ \phi_{f, A_f}(t):=\int_{A_f} e^{-\epsilon_f\tau_f t}d\mu_{A_f}$ and $ \phi_{f, X}(t):=\int_X e^{-\epsilon_f\tau_f t}d\mu$ respectively. Define for every integer $k\ge 0$ $$\tilde{s}_f(k):=\mu(\tau_f>k).$$ Then we have, similar to \eqref{eq:so},
\begin{lemma}
\begin{equation}\label{eq:tauf}\mu_{A_f}(\tau_f>k)=\frac{1}{\mu(A_f)}(\tilde{s}_f(k)-\tilde{s}_f(k+1))+\mu_{T^{-1}A_f}(\tau_f>k+1).\end{equation}
\end{lemma}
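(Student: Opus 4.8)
The plan is to imitate the computation that produced the identity \eqref{eq:so} in the classical case, keeping careful track of the fact that the stopping rule for $\tau_f$ depends on a sliding window of values of $f$ rather than on a single indicator. First I would fix $k\ge 0$ and partition the set $\{\tau_f>k\}$ according to whether the first coordinate lies in $A_f=\supp f$ or not. Concretely, I would start from
\[
\tilde s_f(k)=\mu(\tau_f>k)=\mu\bigl(\{f+f\circ T+\cdots+f\circ T^{k-1}=0\}\bigr),
\]
which is the natural generalization of writing $\tilde s_A(k)=\mu(A^c\cap\cdots\cap T^{-k+1}A^c)$; here the vanishing of a sum of nonnegative functions forces each term to vanish, so this event really is $A_f^c\cap T^{-1}A_f^c\cap\cdots\cap T^{-k+1}A_f^c$ up to a null set. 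That observation is the key point: despite the more elaborate definition of $\tau_f$ via partial sums, the event $\{\tau_f>k\}$ is exactly the ``$k$-fold avoidance of $\supp f$'' event, so all the set algebra from Lemma \ref{lemma:sc} goes through verbatim.

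Next I would run the same splitting as in the proof of Lemma \ref{lemma:sc}. Writing $B_k:=A_f^c\cap T^{-1}A_f^c\cap\cdots\cap T^{-k+1}A_f^c$, decompose $T^{-1}B_k=(A_f\cap T^{-1}B_k)\sqcup(A_f^c\cap T^{-1}B_k)$. By invariance of $\mu$, $\mu(T^{-1}B_k)=\mu(B_k)=\tilde s_f(k)$; the second piece $A_f^c\cap T^{-1}B_k$ is precisely $B_{k+1}$, of measure $\tilde s_f(k+1)$; and the first piece $A_f\cap T^{-1}B_k$, after pushing it into the induced system on $A_f$, has measure $\mu(A_f)\mu_{A_f}(T^{-1}B_k\mid A_f)$. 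The content of the lemma is then to recognize the two remaining conditional probabilities. On one hand $\mu(B_k)=\mu(A_f^c\cap T^{-1}B_k)+\mu(A_f\cap T^{-1}B_k)$ rearranges to
\[
\mu(A_f\cap T^{-1}B_k)=\tilde s_f(k)-\tilde s_f(k+1),
\]
so dividing by $\mu(A_f)$ gives the first term on the right-hand side of \eqref{eq:tauf}. On the other hand, on the set $A_f$ we have $f\ge$ nothing forced, but since $x\in A_f$ means $f(x)$ may be strictly between $0$ and $1$, the event $T^{-1}B_k$ restricted to $A_f$ is $\{x\in A_f:\ \tau_f(Tx)>k\}$, which up to the invariance $T^{-1}(T^{-1}A_f)=T^{-2}A_f$ is exactly $\mu_{A_f}$-measuring the hitting-time event one step later, i.e. it equals $\mu_{T^{-1}A_f}(\tau_f>k+1)$ after translating through $T$. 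Assembling these three identifications yields \eqref{eq:tauf}.

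The step I expect to be the main obstacle is the bookkeeping in that last identification: in the ordinary return-time setting one uses $A\cap T^{-1}A^c\cap\cdots=A\cap T^{-1}(\text{avoid }A\text{ for }k-1\text{ more steps})$ and the shift lands one back inside $A$'s induced system, but here because $f$ need not be an indicator the window condition $f+f\circ T+\cdots\ge 1$ does not decouple cleanly at the first step — a positive but sub-unit value of $f(x)$ contributes to sums that extend past $x$. The correct move is not to try to ``subtract off'' $f(x)$ but simply to note that $\{\tau_f>k\}$ only ever sees where $f$ is \emph{zero}, i.e. the complement of $A_f$, so the first-step analysis is genuinely about $A_f$ versus $A_f^c$ and nothing finer; once that is granted, the shifted event on $A_f$ is literally $T^{-1}$ of the event $\{\tau_f>k+1\}$ intersected with $T^{-1}A_f$ and re-normalized, which is what the notation $\mu_{T^{-1}A_f}(\tau_f>k+1)$ encodes. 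I would make this precise by writing $A_f\cap T^{-1}B_k=A_f\cap T^{-1}(T^{-1}A_f\cap B_{k+1})\cup(\text{part already inside }B_{k+1})$ and checking the two decompositions of $\mu(T^{-1}B_k)$ agree; everything else is routine invariance and additivity.
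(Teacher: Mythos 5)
Your central claim --- that $\{\tau_f>k\}$ coincides (mod $\mu$) with the avoidance event $A_f^c\cap T^{-1}A_f^c\cap\cdots$ because ``the vanishing of a sum of nonnegative functions forces each term to vanish'' --- is false, and it is load-bearing. By definition $\{\tau_f>k\}=\{f\circ T+\cdots+f\circ T^k<1\}$: the window sum must stay \emph{below} $1$, not equal $0$. Since $f$ may take values in $(0,1)$, an orbit can enter $\supp f$ several times in the first $k$ steps and still satisfy $\tau_f>k$, so your $B_k$ is a proper subset of the relevant event, $\mu(B_k)\neq\tilde s_f(k)$, and $A_f^c\cap T^{-1}B_k$ is not $B_{k+1}$. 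Worse, if your identification were correct the lemma itself would collapse: for an avoidance event one has $\{\tau_f>k+1\}\cap T^{-1}A_f=\emptyset$, so the third term $\mu_{T^{-1}A_f}(\tau_f>k+1)$ in \eqref{eq:tauf} would vanish identically and the statement would reduce to the indicator identity \eqref{eq:so}. That term is nonzero precisely because $f$ need not be an indicator, and your argument has no mechanism to produce it: you assign the single quantity $\mu(A_f\cap T^{-1}B_k)/\mu(A_f)$ simultaneously to the left-hand side, to $(\tilde s_f(k)-\tilde s_f(k+1))/\mu(A_f)$, and to $\mu_{T^{-1}A_f}(\tau_f>k+1)$, which would force two of the three terms of \eqref{eq:tauf} to coincide rather than add.

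The correct first-step analysis uses the support of $f$ only where it is legitimate. Write $\mu(A_f)\mu_{A_f}(\tau_f>k)=\tilde s_f(k)-\mu(\{\tau_f>k\}\cap A_f^c)$. On $A_f^c$ one has $f=0$ a.e., so on that set (and only there) the event $\{f\circ T+\cdots+f\circ T^k<1\}$ may be rewritten as $E:=\{f+f\circ T+\cdots+f\circ T^k<1\}$ by inserting the cost-free summand $f$; by $T$-invariance $\mu(E\cap A_f^c)=\mu(T^{-1}E\cap T^{-1}A_f^c)$, and $T^{-1}E=\{\tau_f>k+1\}$. Splitting $T^{-1}A_f^c$ off from $X$ gives $\mu(\{\tau_f>k\}\cap A_f^c)=\tilde s_f(k+1)-\mu(A_f)\mu_{T^{-1}A_f}(\tau_f>k+1)$, and substituting back yields \eqref{eq:tauf}. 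Your closing instinct --- that the window condition does not decouple at the first step --- was the right worry; the resolution is not that $\tau_f$ ``only sees where $f$ is zero,'' but that adding the extra summand costs nothing on $A_f^c$ and turns the event into a $T$-preimage, at the price of a residual term supported on $T^{-1}A_f$, which is exactly the term your proposal loses.
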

\begin{proof} Remember $A_f=\supp f$, then
\begin{align*}
&\quad\mu(A_f)\mu_{A_f}(\tau_f>k)=\mu(\{\tau_f>k\}\cap A_f)\\
&=\mu(\tau_f>k)-\mu(\{\tau_f>k\}\cap A_f^c)\\
&=\mu(\tau_f>k)-\mu(\{f\circ T+\cdots f\circ T^k<1\}\cap A^c_f)\\
&=\mu(\tau_f>k)-\mu(\{f+f\circ T+\cdots f\circ T^k<1\}\cap A^c_f)\\
&=\mu(\tau_f>k)-\mu(T^{-1}(\{f+f\circ T+\cdots f\circ T^k<1\})\cap T^{-1}A_f^c)\\
&=\mu(\tau_f>k)-\mu(\{\tau_f>k+1\}\cap T^{-1}A_f^c)\\
&=\mu(\tau_f>k)-\mu(\tau_f>k+1)+\mu(\{\tau_f>k+1\}\cap T^{-1}A_f)\\
&=\mu(\tau_f>k)-\mu(\tau_f>k+1)+\mu(A_f)\mu_{T^{-1}A_f}(\tau_f>k+1).
\end{align*}
\end{proof}

With this lemma we can rewrite the Laplace transforms using $\{\tilde{s}_f(k)\}_{k=0}^\infty$ similar to Lemma \ref{lem:laplace}.
\begin{lemma}
\begin{equation*}
\phi_{f,A_f}(t)=e^{\epsilon_f t}\phi_{f,T^{-1}A_f}(t)-\frac{e^{\epsilon_f t}-1}{\mu(A_f)}+\frac{e^{\epsilon_f t}-1}{\mu(A_f)}(1-e^{-\epsilon_f t})\sum_{k=0}^\infty e^{-\epsilon_f kt}\tilde{s}_f(k).
\end{equation*}
\end{lemma}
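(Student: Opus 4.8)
The plan is to rerun the generating-function manipulation behind Lemma~\ref{lem:laplace}, feeding in the recursion \eqref{eq:tauf} wherever that proof used \eqref{eq:so}; the only new feature is one extra sum, which will fold back into $\phi_{f,T^{-1}A_f}(t)$. First I would record an identity valid for every positively-measured $B$ (with $t>0$), where $\phi_{f,B}(t):=\int_B e^{-\epsilon_f\tau_f t}\,d\mu_B$: since $\tau_f\ge 1$ is finite $\mu$-a.e., $\phi_{f,B}(t)=\sum_{k\ge1}e^{-\epsilon_f kt}\mu_B(\tau_f=k)$, and writing $\mu_B(\tau_f=k)=\mu_B(\tau_f>k-1)-\mu_B(\tau_f>k)$, shifting the index in the first sum, and using $\mu_B(\tau_f>0)=1$ gives
\begin{equation*}
\phi_{f,B}(t)=1+(e^{-\epsilon_f t}-1)\sum_{k=0}^\infty e^{-\epsilon_f kt}\,\mu_B(\tau_f>k).
\end{equation*}
Every series here is dominated by a geometric one ($\mu_B(\tau_f>k)\le1$, $0<e^{-\epsilon_f t}<1$), so the rearrangement is legitimate. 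I will apply this to $B=A_f$ and to $B=T^{-1}A_f$, the exponential scaling being the common $\epsilon_f$ in both cases (and $\mu(T^{-1}A_f)=\mu(A_f)$ since $T$ preserves $\mu$).

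Next I would substitute \eqref{eq:tauf} into the $B=A_f$ instance, so that $\sum_{k\ge0}e^{-\epsilon_f kt}\mu_{A_f}(\tau_f>k)$ splits into two pieces. The piece $\frac{1}{\mu(A_f)}\sum_{k\ge0}e^{-\epsilon_f kt}(\tilde{s}_f(k)-\tilde{s}_f(k+1))$ telescopes: shifting the index on the $\tilde{s}_f(k+1)$ part and using $\tilde{s}_f(0)=1$ turns it into $\frac{1}{\mu(A_f)}\bigl(e^{\epsilon_f t}-(e^{\epsilon_f t}-1)\sum_{k\ge0}e^{-\epsilon_f kt}\tilde{s}_f(k)\bigr)$. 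The piece $\sum_{k\ge0}e^{-\epsilon_f kt}\mu_{T^{-1}A_f}(\tau_f>k+1)$ becomes $e^{\epsilon_f t}\sum_{j\ge1}e^{-\epsilon_f jt}\mu_{T^{-1}A_f}(\tau_f>j)$ after the shift $j=k+1$, and the displayed identity for $B=T^{-1}A_f$ rewrites the remaining sum as $(e^{-\epsilon_f t}-1)^{-1}\bigl(\phi_{f,T^{-1}A_f}(t)-e^{-\epsilon_f t}\bigr)$, so this piece equals $(e^{-\epsilon_f t}-1)^{-1}\bigl(e^{\epsilon_f t}\phi_{f,T^{-1}A_f}(t)-1\bigr)$.

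Finally I would insert both evaluations into the $B=A_f$ identity and multiply through by $(e^{-\epsilon_f t}-1)$, collecting terms via the elementary simplifications $(e^{-\epsilon_f t}-1)e^{\epsilon_f t}=-(e^{\epsilon_f t}-1)$ and $(e^{-\epsilon_f t}-1)(e^{\epsilon_f t}-1)=-(1-e^{-\epsilon_f t})(e^{\epsilon_f t}-1)$: the $\phi_{f,T^{-1}A_f}$-term emerges as $e^{\epsilon_f t}\phi_{f,T^{-1}A_f}(t)$, the constants collapse to $-\frac{e^{\epsilon_f t}-1}{\mu(A_f)}$, and the $\tilde{s}_f$-series acquires the coefficient $\frac{e^{\epsilon_f t}-1}{\mu(A_f)}(1-e^{-\epsilon_f t})$, which is the claimed formula. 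I do not expect a conceptual obstacle: the work is purely bookkeeping of index shifts and of the boundary values at $k=0$. The one delicate point is that, unlike in Lemma~\ref{lem:laplace}, the expression $\frac{1}{\mu(A_f)}[(\tilde{s}_f(k-1)-\tilde{s}_f(k))-(\tilde{s}_f(k)-\tilde{s}_f(k+1))]$ is no longer $\mu_{A_f}(\tau_f=k)$ (indeed $\tilde{s}_f(1)=\mu(\{f<1\})$ need not equal $1-\mu(\supp f)$), so one cannot quote the closed form of Lemma~\ref{lem:laplace} verbatim; routing the computation through the generating-function identity sidesteps this, the term $\mu_{T^{-1}A_f}(\tau_f>k+1)$ in \eqref{eq:tauf} absorbing exactly the discrepancy.
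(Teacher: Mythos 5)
Your proposal is correct and follows essentially the same route as the paper: both substitute \eqref{eq:tauf} into the series expansion of $\phi_{f,A_f}(t)$, telescope the $\tilde{s}_f$-part, and shift the index in the remaining sum to recover $e^{\epsilon_f t}\phi_{f,T^{-1}A_f}(t)$. The only cosmetic difference is that you carry out the Abel summation once for a general set $B$ and stay with the tail probabilities $\mu_B(\tau_f>k)$, which makes the boundary cancellation (the paper's matching $\mu(f=1)/\mu(A_f)$ terms) happen automatically.
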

\begin{proof} Substitute \eqref{eq:tauf} into the Laplace transform $\phi_{f,A_f}(t)$ to get
\begin{align*}
\phi_{f,A_f}(t)&=\sum_{k=1}^\infty e^{-\epsilon_f kt}\mu_{A_f}(\tau_f=k)\\
&=\frac{1}{\mu(A_f)}\sum_{k=1}^\infty e^{-\epsilon_f kt}[(\tilde{s}_f(k-1)-\tilde{s}_f(k))-(\tilde{s}_f(k)-\tilde{s}_f(k+1))]\\
&\quad +\sum_{k=1}^\infty e^{-\epsilon_f kt}\mu_{T^{-1}A_f}(\tau_f=k+1)\\
&=\frac{1-\tilde{s}_f(1)}{\mu(A_f)}-\frac{(e^{\epsilon_f t}-1)}{\mu(A_f)}\sum_{k=1}^{\infty} e^{-\epsilon_f kt}(\tilde{s}_f(k-1)-\tilde{s}_f(k))\\
&\quad +e^{\epsilon_f t} \sum_{k=1}^\infty e^{-\epsilon_f kt}\mu_{T^{-1}A_f}(\tau_f=k)-\mu_{T^{-1}A_f}(\tau_f=1).
\end{align*}
Simplify the first summation and use a Laplace transform to contract the second summation to continue
\begin{align*}
\phi_{f,A_f}(t)&=\frac{1-\tilde{s}_f(0)}{\mu(A_f)}-\frac{e^{\epsilon_f t}-1}{\mu(A_f)}+\frac{e^{\epsilon_f t}-1}{\mu(A_f)}(1-e^{-\epsilon_f t})\sum_{k=0}^\infty e^{-\epsilon_f kt}\tilde{s}_f(k)\\
&\quad+e^{\epsilon_f t}\phi_{f, T^{-1}A_f}(t)-\frac{\mu(\{\tau_f=1\}\cap T^{-1}A_f)}{\mu(T^{-1}A_f)}\\
&=\frac{\mu(f=1)}{\mu(A_f)}-\frac{e^{\epsilon_f t}-1}{\mu(A_f)}+\frac{e^{\epsilon_f t}-1}{\mu(A_f)}(1-e^{-\epsilon_f t})\sum_{k=0}^\infty e^{-\epsilon_f kt}\tilde{s}_f(k)\\
&\quad+e^{\epsilon_f t}\phi_{f, T^{-1}A_f}(t)-\frac{\mu(f=1)}{\mu(A_f)}\\
&=e^{\epsilon_f t}\phi_{f,T^{-1}A_f}(t)-\frac{e^{\epsilon_f t}-1}{\mu(A_f)}+\frac{e^{\epsilon_f t}-1}{\mu(A_f)}(1-e^{-\epsilon_f t})\sum_{k=0}^\infty e^{-\epsilon_f kt}\tilde{s}_f(k).
\end{align*}
\end{proof}

\begin{lemma}
\begin{equation*}
\phi_{f,X}(t)=1-(1-e^{-\epsilon_f t})\sum_{k=0}^\infty e^{-\epsilon_f kt}\tilde{s}_f(k).
\end{equation*}
\end{lemma}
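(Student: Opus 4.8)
The plan is to mirror the derivation of the second identity in Lemma~\ref{lem:laplace}, which is simpler here than the computation of $\phi_{f,A_f}$ because the relevant telescoping relation is immediate rather than requiring the auxiliary $T^{-1}A_f$ term from \eqref{eq:tauf}. Since $\{\tau_f>k-1\}\setminus\{\tau_f>k\}=\{\tau_f=k\}$ for every $k\ge1$ (using $\tau_f\ge1$), the definition $\tilde s_f(k)=\mu(\tau_f>k)$ gives at once $\mu(\tau_f=k)=\tilde s_f(k-1)-\tilde s_f(k)$, with no correction coming from the non-invariance of the induced measure — that correction was needed only on $(A_f,\mu_{A_f})$, whereas here $\mu$ is $T$-invariant on all of $X$. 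Because $\tau_f$ is finite $\mu$-a.e.\ by Birkhoff's ergodic theorem, $\{\mu(\tau_f=k)\}_{k\ge1}$ is a genuine probability distribution and $\phi_{f,X}(t)=\sum_{k\ge1}e^{-\epsilon_f kt}\mu(\tau_f=k)$ is well defined and absolutely convergent for every $t>0$.

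Next I would substitute the telescoping identity into this series, split the sum into two pieces, and re-index. Concretely, writing $S(t):=\sum_{k=0}^\infty e^{-\epsilon_f kt}\tilde s_f(k)$,
$$\phi_{f,X}(t)=\sum_{k=1}^\infty e^{-\epsilon_f kt}\bigl(\tilde s_f(k-1)-\tilde s_f(k)\bigr)=e^{-\epsilon_f t}\sum_{j=0}^\infty e^{-\epsilon_f jt}\tilde s_f(j)-\Bigl(\sum_{j=0}^\infty e^{-\epsilon_f jt}\tilde s_f(j)-\tilde s_f(0)\Bigr)=e^{-\epsilon_f t}S(t)-S(t)+1,$$
where the rearrangement is legitimate since $0\le\tilde s_f(k)\le1$ and $\sum_k e^{-\epsilon_f kt}<\infty$ for $t>0$ (here $\epsilon_f=\mu(\supp f)>0$ by hypothesis), and where I used $\tilde s_f(0)=\mu(\tau_f>0)=1$. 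Collecting terms yields $\phi_{f,X}(t)=1-(1-e^{-\epsilon_f t})S(t)$, which is exactly the asserted identity.

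There is essentially no genuine obstacle: the only two points requiring a word are (i) recording that $\tilde s_f(0)=1$, which is precisely what produces the additive constant $1$, and (ii) checking convergence of $S(t)$ so that the split-and-reindex step is valid, which is immediate from $0\le\tilde s_f\le1$ together with $t>0$. If one prefers to sidestep any convergence remark, an equivalent route is to apply Abel summation to the partial sums $\sum_{k=1}^N e^{-\epsilon_f kt}\mu(\tau_f=k)$ and pass to the limit, using $e^{-\epsilon_f Nt}\tilde s_f(N)\to0$ as $N\to\infty$; this also makes transparent that no hypothesis beyond $\|f\|_1>0$ and ergodicity enters, and in particular that — unlike in the preceding lemma — no term involving $\phi_{f,T^{-1}A_f}$ survives.
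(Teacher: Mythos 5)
Your proof is correct and follows essentially the same route as the paper, which proves this lemma in one line by invoking the telescoping identity $\mu(\tau_f=k)=\tilde{s}_f(k-1)-\tilde{s}_f(k)$ and summing, exactly as you do (the paper also notes the alternative of setting $A_f=X$ in the preceding lemma). Your additional remarks on convergence and on $\tilde{s}_f(0)=1$ are accurate and just make explicit what the paper leaves implicit.
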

\begin{proof}
It can be seen from using $\mu(\tau_f=k)=\tilde{s}_f(k-1)-\tilde{s}_f(k)$, or from replacing $A_f$ by $X$ in the previous lemma as $X\supset A_f$ and $X=T^{-1}X$.
\end{proof}

Regarding a sequence of functions having shrinking supports we conclude the following result.
\begin{theorem}
Suppose $\{f_n\}\subset L^1(X,\mu)$ with $0\le f_n\le 1$ and $\|f_n\|_1>0$ for every $n$ and  suppose that $\epsilon_{n}=\mu(\supp f_n)$ has the limit $0$. If $$\lim\limits_{n\to\infty}\frac{\mu(f_n=1)}{\mu(\supp f_n)}=1,$$ then the following statements are equivalent. 
\begin{enumerate}
\item The distribution of $\epsilon_n\tau_{f_n}$ on $(X,\mu)$ converges to the exponential distribution with parameter $1$.
\item The distribution of $\epsilon_n\tau_{f_n}$ on $(\supp f_n, \mu_{\supp f_n})$ converges to the exponential distribution with parameter $1$.
\item For every $t>0$ $$\lim_{n\to\infty}(1-e^{-\epsilon_{n} t})\sum_{k=0}^\infty e^{-\epsilon_{n} kt}\tilde{s}_{f_n}(k)=\frac{t}{t+1}.$$
\end{enumerate}
\end{theorem}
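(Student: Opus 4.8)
The plan is to mimic the proof of Theorem \ref{thm:equiv}, using the three Laplace-transform lemmas just established together with the continuity theorem for Laplace transforms. The extra hypothesis $\lim_{n\to\infty}\mu(f_n=1)/\mu(\supp f_n)=1$ is exactly what is needed to control the discrepancy term $e^{\epsilon_n t}\phi_{f_n,T^{-1}A_{f_n}}(t)$ appearing in the formula for $\phi_{f_n,A_{f_n}}(t)$; I would first dispense with the equivalence $(1)\Leftrightarrow(3)$, which is immediate, and then handle $(1)\Leftrightarrow(2)$ using this hypothesis.

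First I would record, from the lemma for $\phi_{f,X}(t)$, that
\begin{equation*}
\phi_{f_n,X}(t)=1-(1-e^{-\epsilon_n t})\sum_{k=0}^\infty e^{-\epsilon_n kt}\tilde{s}_{f_n}(k),
\end{equation*}
so that $\phi_{f_n,X}(t)\to\frac{1}{t+1}$ for every $t>0$ if and only if condition $(3)$ holds; since $\frac{1}{t+1}$ is the Laplace transform of the exponential distribution with parameter $1$ and is continuous at $t=0$, the continuity theorem for Laplace transforms gives $(1)\Leftrightarrow(3)$ verbatim as in the proof of Theorem \ref{thm:equiv}. Next, from the lemma for $\phi_{f,A_f}(t)$ one has
\begin{equation*}
\phi_{f_n,A_{f_n}}(t)=e^{\epsilon_n t}\phi_{f_n,T^{-1}A_{f_n}}(t)-\frac{e^{\epsilon_n t}-1}{\epsilon_n}+\frac{e^{\epsilon_n t}-1}{\epsilon_n}\Bigl(1-e^{-\epsilon_n t}\Bigr)\sum_{k=0}^\infty e^{-\epsilon_n kt}\tilde{s}_{f_n}(k).
\end{equation*}
As $n\to\infty$ the coefficient $\frac{e^{\epsilon_n t}-1}{\epsilon_n}\to t$ and $1-e^{-\epsilon_n t}\to0$ at the rate $\epsilon_n t$, so modulo the first term this expression behaves exactly as in Theorem \ref{thm:equiv}: it tends to $1-t+t\cdot\bigl(\text{the limit in }(3)\bigr)$ whenever that limit exists. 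The point is therefore to show that the ``boundary'' term satisfies $e^{\epsilon_n t}\phi_{f_n,T^{-1}A_{f_n}}(t)\to0$, or more precisely that it can be absorbed; here $\phi_{f_n,T^{-1}A_{f_n}}(t)=\int e^{-\epsilon_n\tau_{f_n}t}\,d\mu_{T^{-1}A_{f_n}}$.

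The key estimate is that $\mu(T^{-1}A_{f_n}\setminus\{\tau_{f_n}=1\})=\mu(T^{-1}A_{f_n})-\mu(f_n=1)=\mu(A_{f_n})-\mu(f_n=1)=o(\mu(A_{f_n}))=o(\epsilon_n)$ by the hypothesis, since on $T^{-1}A_{f_n}$ one has $f_n\circ T>0$, hence (using integer-valued comparison is not available, but) on the subset where additionally $f_n=1\circ T$ — more carefully, $\{\tau_{f_n}=1\}=\{f_n\circ T\ge1\}=T^{-1}\{f_n\ge1\}=T^{-1}\{f_n=1\}$, which has measure $\mu(f_n=1)$. On $\{\tau_{f_n}=1\}$ the integrand $e^{-\epsilon_n\tau_{f_n}t}=e^{-\epsilon_n t}\le1$, and on the complement inside $T^{-1}A_{f_n}$ the integrand is at most $1$ while the complement has $\mu$-measure $o(\epsilon_n)$, hence $\mu_{T^{-1}A_{f_n}}$-measure $o(1)$. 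Therefore $\phi_{f_n,T^{-1}A_{f_n}}(t)=e^{-\epsilon_n t}\cdot\frac{\mu(f_n=1)}{\mu(A_{f_n})}+o(1)\to1$ for each fixed $t>0$, and consequently $e^{\epsilon_n t}\phi_{f_n,T^{-1}A_{f_n}}(t)\to1$. Plugging this in, $\lim_n\phi_{f_n,A_{f_n}}(t)=1-t+t\cdot L(t)$ exactly when $\lim_n\phi_{f_n,X}(t)=1-L(t)$, so both converge to $\frac1{t+1}$ simultaneously, and another application of the continuity theorem for Laplace transforms yields $(1)\Leftrightarrow(2)$.

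The main obstacle — really the only non-routine point — is the control of the boundary term $e^{\epsilon_n t}\phi_{f_n,T^{-1}A_{f_n}}(t)$, and specifically making rigorous the identity $\{\tau_{f_n}=1\}=T^{-1}\{f_n=1\}$ and the resulting split of $\phi_{f_n,T^{-1}A_{f_n}}(t)$ into a main part $e^{-\epsilon_n t}\mu(f_n=1)/\mu(A_{f_n})$ and a remainder of $\mu_{T^{-1}A_{f_n}}$-measure tending to $0$; once $\mu(f_n=1)/\mu(\supp f_n)\to1$ is invoked this is straightforward, but it is where the hypothesis is genuinely used and where a naive adaptation of Theorem \ref{thm:equiv} would break down. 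Everything else is the same Tauberian/continuity-theorem argument already carried out, together with the elementary limits $\frac{e^{\epsilon_n t}-1}{\epsilon_n}\to t$ and $\frac{1-e^{-\epsilon_n t}}{\epsilon_n}\to t$.
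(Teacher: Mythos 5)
Your proposal is correct and follows essentially the same route as the paper: the two Laplace-transform lemmas plus the continuity theorem give $(1)\Leftrightarrow(3)$ immediately, and the hypothesis $\mu(f_n=1)/\mu(\supp f_n)\to1$ is used exactly as the paper uses it, to show $\phi_{f_n,T^{-1}A_{f_n}}(t)\to1$ via the identity $\{\tau_{f_n}=1\}=T^{-1}\{f_n=1\}$ (your expansion of this step is more detailed than the paper's one-line justification). Note only the slip where you write that the boundary term tends to $0$; as you yourself conclude two sentences later, it tends to $1$, which is what makes the limits match.
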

\begin{proof}
As a result of the above calculations of the Laplace transforms it suffices to show that $\lim\limits_{n\to\infty}\phi_{f_n,T^{-1}\supp f_n}(t)=1$. This is ensured  by $\frac{\mu_{T^{-1}\supp f_n}(\tau_{f_n}=1)}{\mu(T^{-1}\supp f_n)}=\frac{\mu(f_n=1)}{\mu(\supp f_n)}\to 1$ in the assumption.
\end{proof}

\section*{Acknowledgments}
The author was partially supported by NSF Grant DMS-1008538. He thanks Prof. Manfred Denker for many insightful comments and generous help.


\bibliographystyle{plain}
\bibliography{bib}

\end{document}